\theoremstyle{plain}% default
\newtheorem{thm}{Theorem}[section]
\newtheorem{lem}[thm]{Lemma}
\newtheorem{prop}[thm]{Proposition}
\theoremstyle{definition}
\newtheorem{conj}[thm]{Conjecture}
\newtheorem{exmp}[thm]{Example}
\theoremstyle{remark}
\newcommand{\cC}{\mathcal{C}}
\newcommand{\bi}{\mathbf{i}}
\newcommand{\R}{\mathbb{R}}
\newcommand{\rank}{\operatorname{rank}}
\title{Tensors of Nonnegative Rank Two}
\author[E. Allman et al.]{Elizabeth S. Allman, John A. Rhodes, \\ Bernd Sturmfels, and Piotr Zwiernik}
\address{Allman and Rhodes: University of Alaska at Fairbanks \hfill \break
Sturmfels and Zwiernik: University of California at Berkeley}
\email{e.allman@alaska.edu, j.rhodes@alaska.edu, \hfill \break
bernd@math.berkeley.edu, 
 pzwiernik@berkeley.edu}
\thanks{BS was supported by NSF
(DMS-0968882) and DARPA (HR0011-12-1-0011), and
PZ  by the European
Union 7th Framework Programme (PIOF-GA-2011-300975).
We thank Luke Oeding and Giorgio Ottaviani
for helpful conversations}
\begin{document}
\begin{abstract}
  A nonnegative tensor has nonnegative rank at most~2 if and only if
  it is supermodular and has flattening rank at most~2.  We prove this
  result, then explore the semialgebraic geometry of the general
  Markov model on phylogenetic trees with binary states, and comment
  on possible extensions to tensors of higher rank.
 \end{abstract}

\keywords {nonnegative tensor rank, latent class model, binary tree model}
\subjclass[2010]{15A69, 62H17, 14P10}

\maketitle

\section{Introduction}

This article offers a journey into {\em semialgebraic statistics}. By
this we mean the systematic study of statistical models as
semialgebraic sets.  We shall give a semialgebraic description of
binary latent class models in terms of binomials expressing
supermodularity, and we determine the algebraic boundary of this and
related models.  Our discussion is phrased in the language of
nonnegative tensor factorization \cite{de2008tensor, FriedlanderHatz}.

We consider real tensors $P=[p_{i_1 i_2 \cdots i_n}]$ of format $d_1
\times d_2 \times \cdots \times d_n$.  Throughout this paper we shall
assume that $n \geq 3$ and $d_1,d_2,\ldots,d_n \geq 2$.  Such a tensor
has {\em nonnegative rank at most $2$} if it can be written~as
\begin{equation}\label{eq:nrank}
P\quad = \quad a_1\otimes a_2\otimes \cdots \otimes a_n\,\,+\,\, 
b_1\otimes b_2\otimes\cdots \otimes  b_n,
\end{equation}
where the vectors $a_i,b_i \in \R^{d_i}$ are nonnegative for $i=1,2,\ldots,n$.
The set of such tensors is a closed semialgebraic subset
of dimension  $2(d_1+d_2 + \cdots + d_n)-2(n-1)$   in the 
tensor space  $\,\R^{d_1 \times d_2 \times \cdots \times d_n}$; see
\cite[\S 5.5]{JM}.
We present the following characterization of this semialgebraic set.

\begin{thm}
\label{theorem:eins}
A nonnegative tensor $P$ has nonnegative rank at most $2$ if and only if
$P$ is supermodular and has flattening rank at most~$2$.
\end{thm}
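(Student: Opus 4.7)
The forward implication is the easier one. Given $P = a_1\otimes\cdots\otimes a_n + b_1\otimes\cdots\otimes b_n$ with nonnegative factors, each flattening is the sum of two rank-one matrices, so the flattening rank is at most~$2$. For supermodularity I would expand $P_IP_J$ and $P_{I\wedge J}P_{I\vee J}$ in the decomposition. Any rank-one tensor $A$ satisfies $A_IA_J = A_{I\wedge J}A_{I\vee J}$, because the multisets $\{i_k,j_k\}$ and $\{\min(i_k,j_k),\max(i_k,j_k)\}$ agree index-by-index; the pure $a$- and $b$-contributions therefore cancel, and the inequality reduces to
\[
\Bigl(\prod_{k \in S} x_k - \prod_{k \in S} y_k\Bigr)\Bigl(\prod_{k \notin S} x_k - \prod_{k \notin S} y_k\Bigr) \leq 0,
\]
with $x_k = a_k(i_k)b_k(j_k)$, $y_k = b_k(i_k)a_k(j_k)$, and $S = \{k : i_k \leq j_k\}$. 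This sign condition holds provided each mode is labeled so that the ratio $a_k(i)/b_k(i)$ is monotone in~$i$, which one may always arrange by a mode-wise permutation.

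For the converse I would argue by induction on $n$. The mode-$1$ flattening $P_{(1)}$ is a nonnegative matrix of rank at most~$2$, and every such matrix admits a nonnegative rank-two factorization $P_{(1)} = \alpha\beta^T + \gamma\delta^T$ with $\alpha,\gamma\in\R^{d_1}_{\ge0}$ and $\beta,\delta\in\R^{d_2\cdots d_n}_{\ge0}$. Reshaping yields $P = \alpha\otimes Q + \gamma\otimes R$ with nonnegative tensors $Q,R\in\R^{d_2\times\cdots\times d_n}$, and it suffices to show that $Q$ and $R$ are each rank-one.

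This last step is the heart of the argument. The remaining flattening-rank constraints on $P$---most importantly those for the partitions $(\{1,k\}\mid[n]\setminus\{1,k\})$ for $k \geq 2$---force $Q$ and $R$ to share a common two-dimensional structure in each of the modes $2,\ldots,n$, while supermodularity of $P$ transfers, via the same cross-term identity as in the forward direction, to supermodularity of $Q$ and $R$ (with their inherited state labelings). By the inductive hypothesis, $Q$ and $R$ each have nonnegative rank at most~$2$; the shared two-dimensional structure together with the explicit rank-two decomposition of $P_{(1)}$ then forces $Q$ and $R$ to collapse to rank one, completing the induction.

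The principal obstacle is to make the passage to $Q$ and $R$ rigorous: to verify that supermodularity of $P$ and the higher flattening conditions descend canonically, and that the joint two-dimensional structure truly rules out genuine rank-two behavior of $Q$ or $R$. A cleaner alternative may be to first compress each mode of $P$ via the nonnegative rank-two factorization of its mode-$k$ flattening, obtaining a nonneg Tucker-style representation $P=(M_1,\ldots,M_n)\cdot P'$ with $M_k\in\R^{d_k\times2}_{\ge0}$ and a $2\times\cdots\times2$ core $P'$ that inherits both hypotheses; the problem then reduces to the binary case $d_k=2$, where the finite system of supermodular binomial inequalities together with the flattening-rank-two conditions can be matched directly against the parametrization of nonnegative rank-two binary tensors.
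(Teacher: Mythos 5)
Your forward direction is correct and a bit cleaner than the paper's. You exploit that a rank-one tensor $A$ satisfies $A_{\mathbf{i}}A_{\mathbf{j}}=A_{\mathbf{i}\wedge\mathbf{j}}A_{\mathbf{i}\vee\mathbf{j}}$, so in $P=A+B$ the pure terms cancel and supermodularity reduces to the displayed sign condition on cross terms, which holds once each mode is permuted so the $2\times 2$ minors of $(a_k,b_k)$ are all nonnegative (phrase this via minors rather than the ratio $a_k(i)/b_k(i)$, to handle zero entries cleanly). The paper reaches the same conclusion less directly: it constructs an auxiliary $2\times d_1\times\cdots\times d_n$ tensor $\hat P$, checks $\hat P$ is supermodular by essentially the same cross-term computation, and then marginalizes using the Ahlswede--Daykin four-function theorem (Lemma~\ref{lem:margins}); your route bypasses that machinery.

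The converse---the substantive half of the theorem---has a genuine gap. You take \emph{some} nonnegative rank-two factorization $P_{(1)}=\alpha\beta^T+\gamma\delta^T$, reshape to $P=\alpha\otimes Q+\gamma\otimes R$, and claim it suffices to show $Q,R$ are rank one. That is false for a generic choice of factorization: nonnegative rank-two factorizations of a fixed $P_{(1)}$ form a positive-dimensional family, and if the (sought) decomposition is $P=a_1\otimes Q'+b_1\otimes R'$ with $Q',R'$ rank one and $a_1=c_{11}\alpha+c_{12}\gamma$, $b_1=c_{21}\alpha+c_{22}\gamma$, then $Q=c_{11}Q'+c_{21}R'$, which is rank one only when $c_{21}=0$, i.e.\ only when $\alpha$ is already proportional to $a_1$. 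So you must produce the \emph{right} nonnegative factorization, and proving one exists is exactly what the theorem asserts; as written the step is circular, and the appeal to a ``shared two-dimensional structure'' does not fill it. Your closing ``cleaner alternative'' (compress to a $2\times\cdots\times2$ core) is indeed the direction the paper takes, but the content is in the details you leave out: the paper chooses two \emph{specific} columns of each flattening $F_i$ that span its column space, modifies them to the shape $(1,0,*,\ldots)$, $(0,1,*,\ldots)$ so that the core $P_0$ becomes a genuine double slice of $P$ (not an abstract Tucker core), then handles the $2\times\cdots\times 2$ case by passing to a $2\times2\times2^{n-2}$ flattening, invoking the explicit $2\times 2\times 2$ result (Proposition~\ref{prop:main43}), applying a change of basis that diagonalizes the first two modes, and finishing with a block-diagonal rank argument on the remaining flattenings. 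Each of these steps carries real weight and none follows from the hypotheses alone.
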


Here, {\em flattening} means picking any subset $A $ of
$[n]=\{1,2,\ldots,n\}$ with $1 \leq |A| \leq n-1$ and writing the
tensor $P$ as an ordinary matrix with $\prod_{i \in A}d_i$ rows and
$\prod_{j \not\in A } d_j$ columns.  The {\em flattening rank} of $P$
is the maximal rank of any of these matrices.  
Landsberg and Manivel \cite{LM} proved that
flattening rank $\leq 2$ is equivalent to  border rank $\leq 2$.

To define
supermodularity, we first fix a tuple $\pi =
(\pi_1,\pi_2,\ldots,\pi_n)$ where $\pi_i$ is a permutation of
$\{1,2,\ldots,d_i\}$.   Then $P$ is {\em  $\pi$-supermodular}~if
\begin{equation}
\label{eq:pisubm}
p_{i_1 i_2 \cdots i_n} \cdot\,
p_{j_1 j_2 \cdots j_n} \,\,\leq \,\,\,
p_{k_1 k_2 \cdots k_n} \cdot\,
p_{l_1 l_2 \cdots l_n}
\end{equation}
whenever
$\{i_r,j_r\} = \{k_r,l_r\}$ and
 $\pi_r(k_r) \leq \pi_r(l_r)$ holds for $r = 1,2,\ldots,n$.
We call a  tensor $P$ {\em supermodular} if it is $\pi$-supermodular for
some $\pi$.

Note that we are using multiplicative notation instead of
the additive notation more commonly used for supermodularity.
To be specific, if $d_1 = d_2 = \cdots = d_n=2$,
$\pi = ({\rm id}, {\rm id}, \ldots, {\rm id})$, and
$P$ is strictly positive, then $P$ being $\pi$-supermodular
means that $ {\rm log}(P)$ lies in the convex polyhedral cone
\cite[\S 4]{CRT} of
supermodular functions $\,2^{\{1,2,\ldots,n\}} \rightarrow \mathbb{R}$.

The set of $\pi$-supermodular nonnegative tensors $P$ of flattening rank $\leq 2$
is denoted $\mathcal{M}_\pi$ and called a {\em toric cell}.
The number of toric cells is $d_1 ! d_2 ! \cdots d_n !/2$.
Theorem \ref{theorem:eins} states that these cells stratify our model:
\begin{equation}
\label{eq:Nunion}
 \mathcal{M} \,\,= \,\,\,\cup_\pi \,\mathcal{M}_\pi .
 \end{equation}
The term {\em model}
refers to the fact that intersection of
(\ref{eq:Nunion}) with the probability simplex,
where all coordinates of $P$ sum to one,
is a widely used statistical model.
It is the mixture model for pairs of
independent distributions on
$n$ discrete random variables.
This justifies our earlier claims about
the dimension  of $\mathcal{M}$ and that it is topologically closed. 

Recall that the
 {\em Zariski closure} $\overline{\mathcal{S}}$ of a semialgebraic subset $\mathcal{S}$
of $\R^{N}$ is the zero set of all polynomials that vanish on $\mathcal{S}$. The
{\em boundary}  $\partial \mathcal{S}$ is the topological boundary of 
$\mathcal{S}$ inside $\overline{\mathcal{S}}$.
We define the
 {\em algebraic boundary} of $\mathcal{S}$ to be
 the Zariski closure $\overline{\partial \mathcal{S}}$ of its topological boundary.

Our second theorem concerns the algebraic boundaries of the model $\mathcal{M}$
and of toric cells $\mathcal{M}_\pi$. We regard these boundaries
as hypersurfaces inside the complex variety of
tensors of border rank $\leq 2$.
A {\em slice} of our tensor $P$ is a subtensor
of some format
$ d_1 {\times} {\cdots} {\times} d_{s-1} {\times} 1 {\times} d_{s+1} {\times} {\cdots} {\times} d_n $.
Subtensors of format
$ d_1 {\times} {\cdots} {\times} d_{s-1} {\times} 2 {\times} d_{s+1} {\times} {\cdots} {\times} d_n $ 
are  {\em double slices}.

\begin{thm} \label{theorem:zwei}
The algebraic boundary of $\mathcal{M}$ has
$ \sum_{i=1}^n d_i$ irreducible
components, given by  slices having
rank $\leq 1$. The algebraic boundary of
any toric cell  $\mathcal{M}_\pi$ has the same
irreducible components, but it  has $\sum_{i=1}^n
\binom{d_i}{2}$ further components, 
given by linearly dependent double slices.
\end{thm}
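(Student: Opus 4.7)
The plan is to work inside the Zariski closure $V := \overline{\mathcal{M}}$, which by Theorem~\ref{theorem:eins} together with the Landsberg--Manivel result equals the irreducible variety of tensors of border rank $\leq 2$, of dimension $2\sum_i d_i - 2(n-1)$. Each toric cell $\mathcal{M}_\pi$ is top-dimensional, so also has Zariski closure $V$, and hence $\overline{\partial\mathcal{M}}$ and $\overline{\partial\mathcal{M}_\pi}$ are unions of codimension-one irreducible subvarieties of $V$. I enumerate these via the parametrization
\[
\Phi \colon (a^{(1)},\ldots,a^{(n)},b^{(1)},\ldots,b^{(n)}) \,\longmapsto\,
a^{(1)}\otimes \cdots \otimes a^{(n)} \,+\, b^{(1)}\otimes \cdots \otimes b^{(n)},
\]
whose restriction to the nonnegative orthant surjects onto $\mathcal{M}$, with generic fibers of dimension $2(n-1)$ coming from scaling each summand.

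A nonnegative parameter $(a,b)$ sits on the orthant boundary in two qualitatively different ways. \emph{Type (a):} some coordinate $a^{(i)}_s$ or $b^{(i)}_s$ vanishes. At $a^{(i)}_s = 0$, the $s$-th slice in direction $i$ of $\Phi(a,b)$ equals $b^{(i)}_s$ times a pure tensor, so its rank drops to at most $1$. The Zariski closure of $\Phi(\{a^{(i)}_s = 0\})$ in $V$ is then the hypersurface $B_{i,s}$ of tensors whose $(i,s)$-slice has rank $\leq 1$, irreducible because its parametric preimage is a linear subspace. \emph{Type (b):} all parameters are strictly positive but $a^{(r)}_k b^{(r)}_l - a^{(r)}_l b^{(r)}_k$ vanishes. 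Rows $k$ and $l$ of the direction-$r$ flattening both lie in the span of the two summand flattenings, and this equality forces them to be parallel, so the $\{k,l\}$-double slice has rank $\leq 1$ as a $2 \times \prod_{j\neq r} d_j$ matrix. Its Zariski closure is the hypersurface $C_{r,\{k,l\}}$, irreducible because the defining $2 \times 2$ determinant is an irreducible polynomial.

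Crossing a type (b) wall swaps an adjacent pair in the ordering $\pi_r$ with all parameters remaining nonnegative, so one passes from $\mathcal{M}_\pi$ into an adjacent cell $\mathcal{M}_{\pi'} \subset \mathcal{M}$. Such walls therefore lie in $\partial\mathcal{M}_\pi$ but in the interior of $\mathcal{M}$. Type (a) strata, by contrast, genuinely separate $\mathcal{M}$ from its complement in $V$, since a negative coordinate cannot be absorbed into a nonnegative rank-$2$ decomposition. This produces only the $\sum_i d_i$ components $B_{i,s}$ for $\overline{\partial\mathcal{M}}$, together with the additional $\sum_i \binom{d_i}{2}$ components $C_{r,\{k,l\}}$ for $\overline{\partial\mathcal{M}_\pi}$, accounting for the count in the theorem.

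The main obstacle is completeness: one must verify that no further codimension-one component appears. The plan is to pick a generic point $P$ on a candidate boundary component, use the generic uniqueness (up to the swap $a \leftrightarrow b$ and the scaling action) of the rank-$2$ decomposition of a border-rank-$\leq 2$ tensor to lift $P$ to parameter space, and show that the only codimension-one parametric degenerations compatible with nonnegativity are of types (a) or (b). Carefully controlling the nonuniqueness locus of $\Phi$, so that no additional component is hidden where the decomposition fails to be unique, will be the heart of the argument.
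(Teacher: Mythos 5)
Your enumeration of the two wall types is correct and matches the paper's, as does the mechanism you describe: crossing a coordinate hyperplane $\{a^{(i)}_s = 0\}$ exits $\mathcal{M}$, while crossing a quadric $\{a^{(r)}_k b^{(r)}_l = a^{(r)}_l b^{(r)}_k\}$ swaps you into an adjacent toric cell and hence stays inside $\mathcal{M}$. That part of your argument is sound.

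The genuine gap is exactly where you flag it: completeness. You write that verifying no further codimension-one component appears ``will be the heart of the argument,'' but you do not give that argument, only a plan. The plan as stated (lift a generic boundary point to parameter space via identifiability and check that the only compatible degenerations are types (a) or (b)) is not self-evidently workable, because a priori the boundary of $\mathcal{M}_\pi$ could be carved out by some supermodular inequality becoming an equality in a way that does not obviously correspond to a parameter hitting the boundary of the orthant. The paper closes this gap with a specific structural observation that your proposal lacks: by Theorem~\ref{theorem:eins}, $\partial\mathcal{M}_\pi$ is contained in the vanishing locus of the \emph{facet} binomials of the supermodular cone (these are the binomials $p_{\mathbf{i}}p_{\mathbf{j}} - p_{\mathbf{k}}p_{\mathbf{l}}$ where the four indices differ in only two positions; the non-facet binomials are algebraic consequences, as in (\ref{eq:identitiessuchas})). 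Each facet binomial, expanded in the parameters, factors as a product of coordinates $a_{ij}, b_{ij}$ and $2\times 2$ determinants $a_{ij}b_{ik}-a_{ik}b_{ij}$, exactly as in (\ref{eq:binomialfactors}). Hence any point of $\partial\mathcal{M}_\pi$ lies on one of the hypersurfaces already catalogued. That factorization, and the passage from all binomials to the finite list of facet binomials, is the missing key idea.

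A secondary issue: you assert that $\overline{\Phi(\{a^{(i)}_s=0\})}$ \emph{is} the rank-$\leq 1$-slice locus $B_{i,s}$, ``irreducible because its parametric preimage is a linear subspace.'' What that reasoning proves is irreducibility of the \emph{image}, which is contained in $B_{i,s}$; it does not prove that $B_{i,s}$ itself (as a subvariety of the border-rank-$\leq 2$ variety) is irreducible, nor that it coincides with the image. The paper is explicit that irreducibility of the rank loci (a) and (b) is left open (and is only verified computationally for $3\times 3\times 3$ in Example~\ref{ex:333zwei}); the theorem only asserts that the boundary components are characterized by lying in those loci. Your phrasing silently upgrades a conjecture to a claim.
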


A double slice is {\em linearly dependent} if
its two slices are identical up to a multiplicative scalar.
In the second component count of Theorem \ref{theorem:zwei}
we exclude the special case $2 \times 2 \times 2$
because the ``further components''
fail to be hypersurfaces.
If $n=2$ then the rank $1$ constraint on slices is void, and
the algebraic boundary consists of the $d_1 d_2$ coordinate 
hyperplanes in  $\mathbb{R}^{d_1 \times d_2}$.
This is consistent with the fact
\cite[Example 4.1.2]{LAS} that
all nonnegative matrices of rank $2$ 
have nonnegative rank $2$.

This paper is organized as follows.  In Section 2 we derive our two
theorems for tensors of format $2 \times 2 \times 2$.  This extends
 results in \cite{allman2012semialgebraic,
altmannsquare, klaeretripod, pearl_tarsi86, pwz-2009-semialgebraictrees} 
on this widely studied latent class
model.  Here, our semialgebraic set $\mathcal{M}$ is full-dimensional
in $\R^{2 \times 2 \times 2}$, and it consists of four toric cells
that are glued together. Any two cells intersect along the locus where
one of the flattenings has rank one. The common intersection of all
cells is the {\em independence model} (tensors of rank $1$).  In
Section 3 we prove Theorems \ref{theorem:eins} and \ref{theorem:zwei}
for arbitrary $d_1,d_2,\ldots,d_n$.

The set $\mathcal{M}$ above appears in phylogenetics as the
{\em general Markov model on a star tree} with binary states.  Section
4 develops the extension of our results to phylogenetic trees other
than star trees. These models have another type of component in their
algebraic boundaries, characterized by the constraint that the ranks
of certain matrix flattenings of $P$ inconsistent with the tree
topology drop from $4$ to~$3$.  Supermodularity in this context was
pioneered by Steel and Faller \cite{steelfaller}.  Our results refine
earlier work on the general Markov model in \cite{allmanrhodesold,
  allman2012semialgebraic, klaeretripod, pwz-2009-semialgebraictrees}.

Section 5 concerns the challenges to be encountered when trying to
extend our results to tensors of higher rank. We present case studies
of algebraic boundaries for one identifiable model ($3 \times 3 \times
2$-tensors of rank $3$) and one non-identifiable model ($2 \times 2
\times 2 \times 2$-tensors of rank~$3$).

\section{The Base Case}

Let $P=[p_{ijk}]$ be a real $2\times 2\times 2$ tensor. 
Then $P$ has nonnegative rank at most $2$ if
there exist three nonnegative $2 \times 2$-matrices
$$
A_1=\left[\begin{array}{cc}
a_{11} & a_{12}\\
b_{11} & b_{12}
\end{array} \right] , \quad
A_2=\left[\begin{array}{cc}
a_{21} & a_{22}\\
b_{21} & b_{22}
\end{array} \right] \quad \hbox{and} \quad
A_3=\left[\begin{array}{cc}
a_{31} & a_{32}\\
b_{31} & b_{32}
\end{array} \right] 
$$
such that
\begin{equation}
\label{eq:para222a}
 \qquad p_{ijk} \, = \,a_{1i} a_{2j} a_{3k} + b_{1i} b_{2j} b_{3k} 
\qquad \hbox{for} \,\,\, i,j,k \in \{1,2\}. 
\end{equation}
For $\pi =  ({\rm id},{\rm id},{\rm id})$, the binomial inequalities for supermodularity are
\begin{equation}
\label{eq:super222}
\begin{matrix}
p_{111} p_{222} \geq p_{112} p_{221} && 
p_{111} p_{222} \geq p_{121} p_{212} &&
p_{111} p_{222} \geq p_{211} p_{122} \\ 
p_{112} p_{222} \geq p_{122} p_{212} &&
p_{121} p_{222} \geq p_{122} p_{221} &&
p_{211} p_{222} \geq p_{212} p_{221} \\
p_{111} p_{122} \geq p_{112} p_{121} &&
p_{111} p_{212} \geq p_{112} p_{211} &&
p_{111} p_{221} \geq p_{121} p_{211}
\end{matrix}
\end{equation}
Nonnegative $2 \times 2 \times 2$ tensors $P$ that  satisfy
these nine inequalities lie in the toric cell 
$\mathcal{M}_{{\rm id}, {\rm id}, {\rm id}} = \mathcal{M}_{(12),(12),(12)}$.
By label swapping $1 \leftrightarrow 2$, we obtain three other toric cells
$\mathcal{M}_{{\rm id}, {\rm id}, (12)} = \mathcal{M}_{(12),(12),{\rm id}}$,
$\mathcal{M}_{{\rm id}, (12), {\rm id}} = \mathcal{M}_{(12),{\rm id},(12)}$, and
$\mathcal{M}_{(12), {\rm id}, {\rm id}} = \mathcal{M}_{{\rm id},(12),(12)}$.
Thus, by definition, the semialgebraic set of all supermodular $2 \times 2 \times 2$-tensors is 
the union
\begin{equation}
\label{eq:unionoffour}
\mathcal{M}\,\,=\,\,
\mathcal{M}_{{\rm id}, {\rm id}, {\rm id}} \,\cup\,
\mathcal{M}_{{\rm id}, {\rm id}, (12)}  \,\cup \,
\mathcal{M}_{{\rm id}, (12), {\rm id}} \,\cup\,
\mathcal{M}_{(12), {\rm id}, {\rm id}} .
\end{equation}
Theorem \ref{theorem:eins} states that
$P \in \R^{2 \times 2 \times 2}$ has nonnegative rank at most $2$ if and only if $P$ lies in
$\mathcal{M}$.
We begin by proving the only-if direction.

\begin{lem}\label{lem:MthenSM}
If $P \in \R^{2 \times 2 \times 2}$ has nonnegative rank at most $2$,
then $P$ is supermodular. More precisely, 
define $\pi = (\pi_1,\pi_2,\pi_3)$ by $\pi_i = {\rm id}$  if  $\det (A_i)\geq 0$ 
and    $\pi_i = (12)$ if $\det (A_i)<0$.
     Then $P \in \mathcal{M}_{\pi}$.
         \end{lem}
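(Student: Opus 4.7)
My plan is to verify the nine supermodularity inequalities of \eqref{eq:super222} directly from the rank-2 parameterization \eqref{eq:para222a}, after reducing to the case $\pi=(\mathrm{id},\mathrm{id},\mathrm{id})$. Applying the transposition $(12)$ to the values of the $i$-th index of $P$ swaps the two slices of $P$ along mode $i$, which in \eqref{eq:para222a} corresponds to swapping the columns of $A_i$ and hence negates $\det(A_i)$; this also converts $\pi$-supermodularity into $\pi'$-supermodularity with $\pi'_i=(12)\pi_i$. Performing these transpositions for each $i$ with $\pi_i=(12)$ reduces the problem to proving: if the rank-2 decomposition \eqref{eq:para222a} has $\det(A_1),\det(A_2),\det(A_3)\geq 0$, then $P$ satisfies all nine inequalities of \eqref{eq:super222}.

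Under this reduction I will factor each of the nine differences $p_Kp_L-p_Ip_J$ of \eqref{eq:super222}. The six inequalities in rows two and three compare tuples that differ in exactly two coordinates $r,r'$. Direct expansion of \eqref{eq:para222a} gives, for each of them,
\begin{equation*}
p_Kp_L-p_Ip_J \;=\; c\,\det(A_r)\det(A_{r'}),
\end{equation*}
where $c$ is a nonnegative monomial in the entries of the third matrix. For example, $p_{111}p_{212}-p_{112}p_{211}=a_{21}b_{21}\det(A_1)\det(A_3)$, and the other five follow analogously. Each such expression is nonnegative under our assumption.

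For the three inequalities in the first row of \eqref{eq:super222} the tuples differ in all three coordinates and the factorization is more subtle: it is not a product of three determinants. Expansion yields $\det(A_r)$ times a quadratic of the form $\alpha_1\alpha_2-\beta_1\beta_2$, where $\alpha_i-\beta_i=\det(A_i)$; adding and subtracting $\alpha_2\beta_1$ rewrites this as $\alpha_2\det(A_1)+\beta_1\det(A_2)$, a nonnegative combination of $\det(A_1)$ and $\det(A_2)$. Concretely, $p_{111}p_{222}-p_{112}p_{221}=\det(A_3)\bigl(a_{21}b_{22}\det(A_1)+a_{12}b_{11}\det(A_2)\bigr)$, and the other two cases follow by permuting the three modes. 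Both factors are nonnegative under our assumption, completing the nine verifications and, after reversing the initial reduction, proving $P\in\mathcal{M}_\pi$.

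The main obstacle is producing this three-term factorization in the case where all three coordinates differ. The naive hope for a clean product of three determinants fails, and one must recognize the identity $\alpha_1\alpha_2-\beta_1\beta_2=\alpha_2\det(A_1)+\beta_1\det(A_2)$. It is precisely here that one sees why the three determinants must share a common sign, since otherwise the sum could be negative; the lemma's choice of $\pi$ is exactly what guarantees the required sign agreement.
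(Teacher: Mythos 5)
Your proof is correct, and the overall structure matches the paper's: reduce to $\pi=(\mathrm{id},\mathrm{id},\mathrm{id})$ by observing that applying a transposition to a mode swaps columns of $A_i$ and flips $\det(A_i)$, then verify the nine binomials of \eqref{eq:super222} under the sign hypothesis $\det(A_i)\geq 0$. The six ``two-coordinates-differ'' factorizations you give coincide with \eqref{eq:binomialfactors}. Where you genuinely diverge is in the three ``all-three-coordinates-differ'' binomials. The paper proves these via the rational identity \eqref{eq:identitiessuchas}, which divides by $p_{112}$ and so is only valid for strictly positive parameters; the paper then invokes a closure/density argument (``these lie in the closure of the previous case'') to pass to nonnegative parameters and determinants. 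You instead produce a direct polynomial identity: after canceling the pure-$a$ and pure-$b$ terms,
\begin{equation*}
p_{111}p_{222}-p_{112}p_{221}
\;=\;\det(A_3)\bigl(a_{11}a_{21}b_{12}b_{22}-a_{12}a_{22}b_{11}b_{21}\bigr)
\;=\;\det(A_3)\bigl(a_{21}b_{22}\det(A_1)+a_{12}b_{11}\det(A_2)\bigr),
\end{equation*}
where the last step is the identity $\alpha_1\alpha_2-\beta_1\beta_2=\alpha_2(\alpha_1-\beta_1)+\beta_1(\alpha_2-\beta_2)$ applied to $\alpha_1=a_{11}b_{12}$, $\beta_1=a_{12}b_{11}$, $\alpha_2=a_{21}b_{22}$, $\beta_2=a_{22}b_{21}$; both checks are correct. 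This manifestly nonnegative sum-of-products form holds identically in the parameters, so your argument dispenses with the closure step entirely. That is a real simplification: the paper's density argument requires verifying that positive parameters with positive determinants are dense in the nonnegative region, a point the paper leaves implicit, whereas your polynomial factorization is immediate.
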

    
\begin{proof} Let $P $ be as in (\ref{eq:para222a}). The last six constraints in (\ref{eq:super222}) 
specialize to
\begin{equation}
\label{eq:binomialfactors}
\begin{matrix}
 p_{112} p_{222} - p_{122} p_{212} & = & a_{32} b_{32} {\rm det}(A_1) {\rm det}(A_2), \\
 p_{121} p_{222} - p_{122} p_{221} & = & a_{22} b_{22} {\rm det}(A_1) {\rm det}(A_3), \\ 
 p_{211} p_{222} - p_{212} p_{221} & =  & a_{12} b_{12} {\rm det}(A_2) {\rm det}(A_3), \\
 p_{111} p_{122} - p_{112} p_{121}  & = & a_{11} b_{11} {\rm det}(A_2) {\rm det}(A_3), \\
 p_{111} p_{212} - p_{112} p_{211} & =  & a_{21} b_{21} {\rm det}(A_1) {\rm det}(A_3), \\ 
 p_{111} p_{221} - p_{121} p_{211} & = & a_{31} b_{31} {\rm det}(A_1) {\rm det}(A_2). 
 \end{matrix}
\end{equation}
First suppose that all $12$ parameters $a_{ij}$ and $b_{ij}$ 
and the three determinants
${\rm det}(A_k)  $ are positive. Then the six
expressions in (\ref{eq:binomialfactors}) are positive.
The first three constraints
in (\ref{eq:super222}) are also satisfied, as seen from
\begin{equation}
\label{eq:identitiessuchas}
\begin{matrix}
 p_{111} p_{222} - p_{121} p_{212} \,\,= \qquad \qquad \qquad \qquad \qquad \qquad 
 \qquad \qquad \\ 
( p_{111}( p_{112} p_{222} - p_{122} p_{212}) + 
                                 p_{212} (p_{111} p_{122} - p_{112} p_{121}) )/p_{112}.
\end{matrix}
\end{equation}
Second, consider all
tensors $P$ where the parameters 
$a_{ij} ,b_{ij}$ and determinants ${\rm det}(A_k)$ are nonnegative.
These lie in the closure of the previous case, so the nine binomials will be nonnegative.

Next observe that $\pi P=( p_{\pi_1^{-1}(i)\pi_2^{-1}(j)\pi_3^{-1}(k)})$ also
has nonnegative rank $\leq 2$,
  with parameterization given by swapping the columns of $A_i$ 
  whenever $\pi_i = (12)$.
  This changes the sign of $\det A_i$. Hence, $P$ is in
     $\mathcal{M}_\pi$
      if and only if $\pi^{-1}P$ is in $\mathcal{M}_{{\rm id},{\rm id},{\rm id}}$.
      \end{proof}

We now prove Theorem \ref{theorem:eins}
for $2\times 2\times 2$ tensors.
In this special case, the flattening rank is automatically $\leq 2$,
so there are no equational constraints, and our model
$\mathcal{M}$ is a full-dimensional subset of $\R^{2 \times 2 \times 2}$.

\begin{prop}\label{prop:main43} Let $P$ be a  nonnegative $2\times 2\times 2$-tensor. Then $P$
has nonnegative rank $\leq 2$ if and only if $P$ is supermodular.
\end{prop}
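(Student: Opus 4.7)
The ``only if'' direction is exactly Lemma \ref{lem:MthenSM}. For the converse, my plan is as follows. Assume $P$ is supermodular. Applying permutations $\pi_i \in S_2$ to the coordinates of $P$ exchanges the columns of the corresponding $A_i$ in any nonnegative rank-$2$ factorization and preserves the nonnegative rank; this is the symmetry argument used at the end of the proof of Lemma \ref{lem:MthenSM}. Hence I may assume $\pi = ({\rm id},{\rm id},{\rm id})$, so that $P$ satisfies all nine inequalities of (\ref{eq:super222}). It suffices to exhibit nonnegative $2\times 2$ matrices $A_1,A_2,A_3$ realizing (\ref{eq:para222a}).

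The core idea is to analyze the pencil generated by the two third-coordinate slices $M_k = [p_{ijk}]_{i,j=1,2}$, $k=1,2$. Consider the binary quadratic form
\begin{equation*}
Q(\lambda,\mu) \,=\, \det(\lambda M_1 + \mu M_2) \,=\, \det(M_1)\,\lambda^2 \,+\, c\,\lambda\mu \,+\, \det(M_2)\,\mu^2,
\end{equation*}
where $c = p_{111}p_{222} + p_{112}p_{221} - p_{121}p_{212} - p_{122}p_{211}$. The inequality $p_{111}p_{221} \geq p_{121}p_{211}$ (bottom-right of (\ref{eq:super222})) gives $\det M_1 \geq 0$, and $p_{112}p_{222} \geq p_{122}p_{212}$ (top-left of the middle row) gives $\det M_2 \geq 0$. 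The crucial step is to verify that the discriminant $c^2 - 4 \det(M_1)\det(M_2)$ of $Q$ is also nonnegative; this should follow by combining the three inequalities in the top row of (\ref{eq:super222}) with algebraic identities in the spirit of (\ref{eq:identitiessuchas}). Thus $Q$ has two real roots, producing two rank-$\leq 1$ matrices $N_i = \lambda_i M_1 + \mu_i M_2$ in the pencil.

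Next I would show, using the remaining six supermodular inequalities, that the roots $(\lambda_i,\mu_i)$ can be chosen so that $N_i = \varepsilon_i\, u_i v_i^{\top}$ for some $\varepsilon_i \in \{+1,-1\}$ and some $u_i,v_i \in \R^2_{\geq 0}$. In the nondegenerate case the two roots turn out to have opposite signs in an appropriate sense, so that one gets $\varepsilon_1 = +1$, $\varepsilon_2 = -1$. Inverting the linear relation then expresses $M_k = \alpha_{k1}\,u_1 v_1^{\top} + \alpha_{k2}\,u_2 v_2^{\top}$ with nonnegative scalars $\alpha_{kj}$. Setting $a_1 = u_1$, $a_2 = v_1$, $b_1 = u_2$, $b_2 = v_2$, and $A_3 = (\alpha_{kj})$ then yields the desired decomposition (\ref{eq:para222a}).

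The principal obstacle I anticipate is the degenerate case analysis: when $\det(M_1) = \det(M_2) = 0$, so $Q$ reduces to a monomial in $\lambda\mu$ with at most one projective root; when the discriminant vanishes, yielding a double root and thus only one rank-$\leq 1$ matrix in the pencil; or when zero entries of $P$ make the sign analysis of the previous step delicate. I plan to dispatch these either by ad hoc construction (supermodularity together with several vanishing $p_{ijk}$ typically forces enough structure to factor $P$ by inspection) or by perturbing $P$ to a strictly positive supermodular tensor and invoking the topological closedness of the nonnegative rank-$\leq 2$ locus noted after (\ref{eq:Nunion}).
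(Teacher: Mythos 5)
Your pencil formulation is mathematically equivalent to the paper's route: the discriminant of $Q(\lambda,\mu)=\det(\lambda M_1+\mu M_2)$ is exactly the hyperdeterminant ${\rm Det}(P)$, and the paper also proceeds by showing ${\rm Det}(P)\ge 0$, extracting a real rank-$2$ decomposition, and then forcing it nonnegative via sign bookkeeping. So the strategy is sound, but two of the three steps you flag as ``should follow'' are precisely where the paper has to work hardest, and your sketches of them do not go through as stated.

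First, the discriminant nonnegativity. You suggest it should come from the top row of (\ref{eq:super222}) plus identities like (\ref{eq:identitiessuchas}). It does not: the top-row inequalities alone do not even imply $\det M_1\ge 0$ (take $p_{111}=p_{222}=2$, $p_{112}=p_{221}=0$, and the remaining four entries equal to $1$; then the top row holds but $\det M_1=-1$), so they certainly cannot force $c^2\ge 4\det M_1\det M_2$ by themselves. The paper proves ${\rm Det}(P)\ge 0$ via the identity (\ref{eq:hyperdetidentity}), $p_{+++}^2{\rm Det}(P)=\mu^2+4U_{12}U_{13}U_{23}$, and then proves $U_{12}U_{13}U_{23}\ge 0$ through a genuinely delicate argument: one must show that the marginal $2\times 2$ determinants $U_{ij}$ inherit the sign of the slice determinants $U_{ij}^1,U_{ij}^2$. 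This requires expanding $U_{12}=U_{12}^1+U_{12}^2+R$ and showing $R\ge 0$ using the fact (itself extracted from all nine inequalities) that the $2\times 2$ matrix $[f_{ij}]$ of cross-products is supermodular. This sub-argument is the real content of the proof and is entirely missing from your outline.

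Second, the degenerate case. Your fallback of perturbing to a strictly positive supermodular tensor and invoking closedness is not safe: multiplicative supermodularity is not preserved under adding a small positive constant (one needs $p_k+p_l\ge p_i+p_j$, i.e.\ additive supermodularity, which is not implied), and it is not obvious that an arbitrary supermodular $P$ with zero entries, or with $\det M_1=\det M_2=0$, lies in the closure of strictly positive supermodular tensors with ${\rm Det}(P)>0$. The paper avoids this entirely. It observes that the relevant degeneracy is $U_{12}U_{13}U_{23}=0$; then property (iii) forces $U_{ij}^1=U_{ij}^2=0$ for the same pair $ij$, and a primary decomposition of the ideal $\langle U_{12},U_{12}^1,U_{12}^2\rangle$ shows that one of the two flattenings ${\rm Flat}_{1|23}(P)$ or ${\rm Flat}_{2|13}(P)$ has rank one, reducing at once to the matrix case where nonnegative rank equals rank. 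Your sketch would need a replacement for this step; ``ad hoc construction'' and ``perturbation'' as written do not supply one. Finally, your sign analysis in the nondegenerate case (showing the pencil roots give $N_i=\varepsilon_i u_iv_i^\top$ with $u_i,v_i\ge 0$ and $\varepsilon_1=-\varepsilon_2$) is the other nontrivial part; the paper does this via the explicit formulas (\ref{eq:idents3}) for $U_{ij},U_{ij}^1,U_{ij}^2$ in terms of the parameters, and you would need something equivalent.
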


\begin{proof}
If $P$ has nonnegative rank $\leq 2$, then $P$ is supermodular by  Lemma \ref{lem:MthenSM}. 
For the converse, suppose that $P$ is supermodular. Define
\begin{equation}
\label{eq:UUUdef1}
\begin{array}{l}
U_{12}\,=\,p_{11+}p_{22+}-p_{12+}p_{21+},\\
U_{13}\,=\,p_{1+1}p_{2+2}-p_{1+2}p_{2+1},\\
U_{23}\,=\,p_{+11}p_{+22}-p_{+12}p_{+21},\\
\end{array}
\end{equation}
where a subscript $+$ refers to summing over all values of the given index.
For example, $p_{22+}=p_{221}+p_{222}$. Similarly, for $i=1,2$,  define
\begin{equation}
\label{eq:UUUdef2}
\begin{array}{l}
U_{12}^i\,=\,p_{11i}p_{22i}-p_{12i}p_{21i},\\
U_{13}^i\,=\,p_{1i1}p_{2i2}-p_{1i2}p_{2i1},\\
U_{23}^i\,=\,p_{i11}p_{i22}-p_{i12}p_{i21}.\\
\end{array}
\end{equation}
Our strategy is to first show that the following hold for~$P$:
\begin{itemize}
\item[(i)] $U_{12}U_{13}U_{23}\geq 0$,
\item[(ii)] $U_{ij}^1$ and $U_{ij}^2$ have the same sign as $U_{ij}$ for every $i<j$, and
\item[(iii)] if $U_{ij}=0$, then $U_{ij}^1=U_{ij}^2=0$.
\end{itemize}
Subsequently, in the second step, we will show that (i), (ii), (iii) imply that $P$ has the form
(\ref{eq:para222a}) with $A_1,A_2,A_3$ nonnegative. That second step
will follow proofs of closely related
results in \cite{allman2012semialgebraic,
altmannsquare, klaeretripod, pearl_tarsi86, pwz-2009-semialgebraictrees}.

Let $e = ({\rm id},{\rm id},{\rm id})$. Since $\pi \mathcal{M}_e = \mathcal{M}_\pi$,
and the conditions (i),(ii),(iii) are invariant under label swapping, it suffices to consider
$P \in \mathcal{M}_e$.
By definition of $e$--supermodularity, $U_{ij}^1,U_{ij}^2\geq 0$. We need to show that 
$U_{ij}\geq 0$ also. By symmetry it suffices to show that $U_{12}\geq 0$. We have
\begin{equation}\label{eq:U12}
U_{12}=U_{12}^1+U_{12}^2+(p_{111}p_{222}+p_{221}p_{112}-p_{211}p_{122}-p_{121}p_{212}).
\end{equation}
We show that the expression in parentheses  is nonnegative for 
$P \in \mathcal{M}_e$. We write this expression as
 $R=f_{11}+f_{22}-f_{12}-f_{21}$, where
 $$
f_{11}=p_{111}p_{222},\quad f_{12}=p_{121}p_{212},\quad  f_{21}=p_{211}p_{122},\quad f_{22}=p_{221}p_{112}
$$
Note that, by \eqref{eq:super222}, we have $f_{11} \ge \max \{ f_{12}, f_{21} \}$. This implies
$R \ge 0$ if either $p_{ijk}  = 0$ for some $i,j,k$, or if $f_{22}\geq \min\{f_{12},f_{21}\}$.
Thus, we assume that $f_{ij} > 0$ and $f_{22} < \min \{f_{12}, f_{21}\}$.
The supermodular inequalities $p_{121}p_{211}\leq p_{111}p_{221}$ and 
$p_{212}p_{122}\leq p_{222}p_{112}$
imply
$$ f_{12} f_{21} \,\,= \,\,
p_{121}p_{212}p_{211}p_{122}\,\,
\leq \,\, p_{111}p_{222}p_{221}p_{112} \,\,=\,\, f_{11} f_{22}.
$$
Hence $[f_{ij}]$ is supermodular itself. As a consequence, we have
$$ \frac{f_{21}}{f_{11}}-1
\,\leq \,\frac{f_{22}}{f_{12}}-1
\,\leq \,\bigg(\frac{f_{22}}{f_{12}}-1\bigg)\frac{f_{12}}{f_{11}},
$$
where the second inequality holds since $f_{22} < f_{12} \le f_{11}$.

After multiplying both sides by $f_{11}$ we obtain
$$
f_{21}-f_{11}\,\leq \,f_{22}-f_{12}
$$
or equivalently $R\geq 0$. It follows that $U_{12}\geq 0$ and, by 
symmetry, that $U_{ij}\geq 0$ for all $i<j$; therefore (i) and (ii) hold. 
The identity (\ref{eq:U12}) and the inequality $R\geq 0$
together imply that (iii) holds as well.

\smallskip

We take up separately the cases where the product $U_{12}U_{13}U_{23}$ is positive 
or is zero.  Suppose first that $U_{12}U_{13}U_{23} > 0$. 
A special role in our argument will be played by the {\em hyperdeterminant},
$$\begin{array}{l}
{\rm Det}(P)=4p_{111}p_{122}p_{212}p_{221} + 4p_{112}p_{121}p_{211}p_{222}
        + p_{111}^2p_{222}^2 + p_{122}^2p_{211}^2 \\ + p_{112}^2p_{221}^2 
              -   2p_{111}p_{112}p_{221}p_{222} - 2p_{111}p_{121}p_{212}p_{222}
    - 2p_{111}p_{122}p_{211}p_{222}
   \\   + p_{121}^2p_{212}^2
  -  2p_{112}p_{121}p_{212}p_{221}
    - 2p_{112}p_{122}p_{211}p_{221 }
   - 2p_{121}p_{122}p_{211}p_{212}.\end{array}
$$
One can verify the identity
\begin{equation}
\label{eq:hyperdetidentity}
p_{+++}^2{\rm Det}(P)\,\,=\,\,\mu^2+4U_{12}U_{13}U_{23}, \qquad {\rm where} 
\end{equation}
$$
\mu=p_{+++}^2p_{222}-p_{+++}(p_{2++}p_{+22}+p_{+2+}p_{2+2}+p_{++2}p_{22+})+2p_{2++}p_{+2+}p_{++2}.
$$
Then (\ref{eq:hyperdetidentity}) implies
${\rm Det}(P)>0$. 

By \cite[Proposition 5.9]{de2008tensor} 
we can write $P$ in terms of \emph{real}  vectors $a_i,b_i$ as in (\ref{eq:para222a}). We obtain 
the identities
$$\begin{array}{lcl}
U_{12}&=&\det A_1 \det A_2\,(a_{31}+a_{32})(b_{31}+b_{32}),\\
U_{13}&=&\det A_1 \det A_3\,(a_{21}+a_{22})(b_{21}+b_{22}),\\
U_{23}&=&\det A_2 \det A_3\,(a_{11}+a_{12})(b_{11}+b_{12}).
\end{array}
$$  
Since $U_{12}U_{13}U_{23}$ is strictly positive,  the coordinate sum of each 
vector  $a_i, b_i$ is nonzero. Hence  our model can be equivalently parametrized by 
\begin{equation}\label{eq:nrank2P3}
P=s a_1\otimes a_2\otimes a_3+tb_1\otimes b_2\otimes b_3, 
\end{equation}
where $s,t\in \R$ and the coordinates of $a_i,b_i$ sum to~$1$. We now show 
that (i)--(iii) ensures these parameters to be nonnegative. 
Note that
\begin{equation}\label{eq:idents3}
\begin{array}{lcl}
U_{12}&=&\det A_1 \det A_2\,st,\\
U_{12}^1&=& \det A_1 \det A_2\, st a_{31}b_{31},\\
U_{12}^2&=& \det A_1 \det A_2\, st  a_{32}b_{32},
\end{array}
\end{equation}
and similar formulas hold for $U_{13},U_{13}^1,U_{13}^2$ and $U_{23},U_{23}^1,U_{23}^2$.

Under the specialization (\ref{eq:nrank2P3}),
the hyperdeterminant factors as
$$
{\rm Det}(P)\,\,=\,\,(st\det A_1 \det A_2\det A_3)^2.
$$
This gives
$$
st\quad=\quad\frac{U_{12}U_{13}U_{23}}{{\rm Det}(P)}\quad > \quad 0,
$$
and thus either $s,t > 0$ or $s,t < 0$. 
By (ii), $U_{12},U_{12}^1,U_{12}^2$ have the same signs. Hence 
$a_{31}b_{31} \ge  0$ and $a_{32}b_{32} \ge 0$  by (\ref{eq:idents3}).
 This, together with the fact that $[p_{++i}]=s a_3+t b_3$ is a nonnegative vector, 
 implies that $a_3,b_3\in \R^2_{\ge 0}$ if $s,t > 0$ and $a_3,b_3\in \R^2_{\le 0}$ if $s,t < 0$. 
 The same argument shows that $a_1,b_1,a_2,b_2\in \R^2_{\ge 0}$ if $s,t > 0$ and 
 $a_1,b_1,a_2,b_2\in \R^2_{\le 0}$ if $s,t < 0$. Hence,  we obtain a nonnegative 
 decomposition in (\ref{eq:nrank2P3}).
 
Suppose now that $U_{12}U_{13}U_{23}=0$. Without loss of generality, 
assume $U_{12}=0$.  Hypothesis (iii) implies $U_{12}^1=U_{12}^2=0$.
Regard the expressions in (\ref{eq:UUUdef1})
and (\ref{eq:UUUdef2}) as elements in the
polynomial ring $\mathbb{Q}[p_{111},p_{112},\ldots,p_{222}]$.
A computation reveals the prime decomposition
\begin{multline*}
 \langle U_{12}, U_{12}^1, U_{12}^2 \rangle\, \,\,=\ \ 
\langle \,\hbox{$2 {\times} 2$-minors of } {\rm Flat}_{1|23}(p) \,\rangle \\ \cap 
\langle \,\hbox{$2 {\times} 2$-minors of } {\rm Flat}_{2|13}(p) \,\rangle ,
\end{multline*}
where
$ \,{\rm Flat}_{1|23}(p)= \begin{pmatrix}
p_{111} \!&\! p_{112} \! &\! p_{121} \! &\! p_{122}\\
p_{211} \!&\! p_{212} \! &\! p_{221}\! &\! p_{222}
\end{pmatrix}$, and similarly for 
${\rm Flat}_{2|13}(p)$.

\smallskip
\noindent Hence one of these two flattenings of the
tensor $P \in \mathcal{M}_e$ has rank~$1$. Suppose it is the first.
We can find $v \in \mathbb{R}^2_{\geq 0}$ such that
 $p_{ijk}= v_i \cdot p_{+jk}$ for every $i,j,k\in \{1,2\}$.
 Since the $ 2 \times 2$-matrix $(p_{+jk})$ 
 can be written as  $\,(p_{+jk}) =a_2\otimes a_3+b_2\otimes b_3\,$
 for some  $a_2,b_2,a_3,b_3\in \R_{\geq 0}^2$, we
obtain the desired nonnegative representation
(\ref{eq:nrank2P3}) by setting $a_{1i}=b_{1i}=v_i$.
\end{proof}

\smallskip

Theorem \ref{theorem:zwei} tells us that the algebraic boundary
of $\mathcal{M}$ equals
\begin{multline*}
 \{p_{112} p_{222} {=} p_{122} p_{212} \} \cup
\{p_{121} p_{222} {=} p_{122} p_{221} \} \cup
\{p_{211} p_{222} {=} p_{212} p_{221} \} \,\cup  \\
\{p_{111} p_{122} {=} p_{112} p_{121} \} \cup
\{p_{111} p_{212} {=} p_{112} p_{211} \} \cup
\{p_{111} p_{221} {=} p_{121} p_{211}\}. \phantom{\cup}
\end{multline*}
Each toric cell $\mathcal{M}_\pi$ has exactly the same
algebraic boundary because the linear dependence constraint on double slices is void 
in the $2 \times 2 \times 2$-case. 
The coordinate planes $\{p_{ijk} = 0\}$ 
are {\bf not} part of the algebraic boundary of $\mathcal{M}$
or $\mathcal{M}_\pi$. Indeed, the inverse image of
$\{p_{ijk} = 0\}$ under the parametrization 
\label{eq:para222} lies in the boundary. But, if any of the parameters 
$a_{ij}$ or $b_{ij}$ is zero then the tensor $P$ has a rank $1$ slice.
Hence, the set $\{p_{ijk} = 0\} \cap \mathcal{M}$ 
lies in the union above.
Similarly, the hyperdeterminant 
$\{{\rm Det}(P) = 0\}$ is {\bf not} a component in the algebraic boundary of $\mathcal{M}$.

\begin{exmp}
It is instructive to look at a $3$-dimensional picture of
our $7$-dimensional model $\mathcal{M}$.
We consider the {\em Jukes-Cantor slice} given~by
$$
\begin{bmatrix}
 p_{111} & p_{112} \\
 p_{121} & p_{122} 
 \end{bmatrix} 
 \, = \,
 \begin{bmatrix}
 x & y \\
 z & w 
 \end{bmatrix}
 \quad \hbox{and} \quad
 \begin{bmatrix}
 p_{211} & p_{212} \\
 p_{221} & p_{222} 
 \end{bmatrix}
 \, = \,
 \begin{bmatrix}
 w & z \\
 y & x 
 \end{bmatrix}.
$$ 
Under this specialization, the hyperdeterminant factors as
 \begin{equation}
 \label{eq:hyperdetspecial}
  {\rm Det}(P) \,= \,
 (x+y+z+w)(x+y-z-w)(x-y+z-w)(x-y-z+w) . 
 \end{equation}
 Consider the tetrahedron $\,
 \bigl\{ (x,y,z,w) \in \R^4_{\geq 0} \,:\,
 x+y+z+w = 1/2 \bigr\}$.
 Fixing the signs of the last three factors in (\ref{eq:hyperdetspecial})
 divides the tetrahedron
 into four bipyramids and four smaller tetrahedra. Inside our slice,
the four toric cells of (\ref{eq:unionoffour})
occupy the bipyramids. Each toric cell
is precisely the object in \cite[Figure 1]{EHS}.
Redrawn on the right in Figure 1, 
its convex hull is the bipyramid,
and it contains  six of the nine edges.
Any two of the toric cells meet in a line segment
such as 
$\,\{ x+y-z-w = x-y+z-w = 0 ,\, x-y-z+w \geq 0 \}$.
The algebraic boundary of each toric cell
consists of the same three quadrics
$\{xy=zw\} $, $\{xz=yw\}$ and $\{xw=yz\}$.
Neither the three planes in (\ref{eq:hyperdetspecial})
nor the four facet planes of the tetrahedron are in 
 the algebraic boundary. 
\qed
\end{exmp}

\begin{figure}\label{fig:slice}
\includegraphics[scale=.26,trim=100 20 10 10, clip=true]{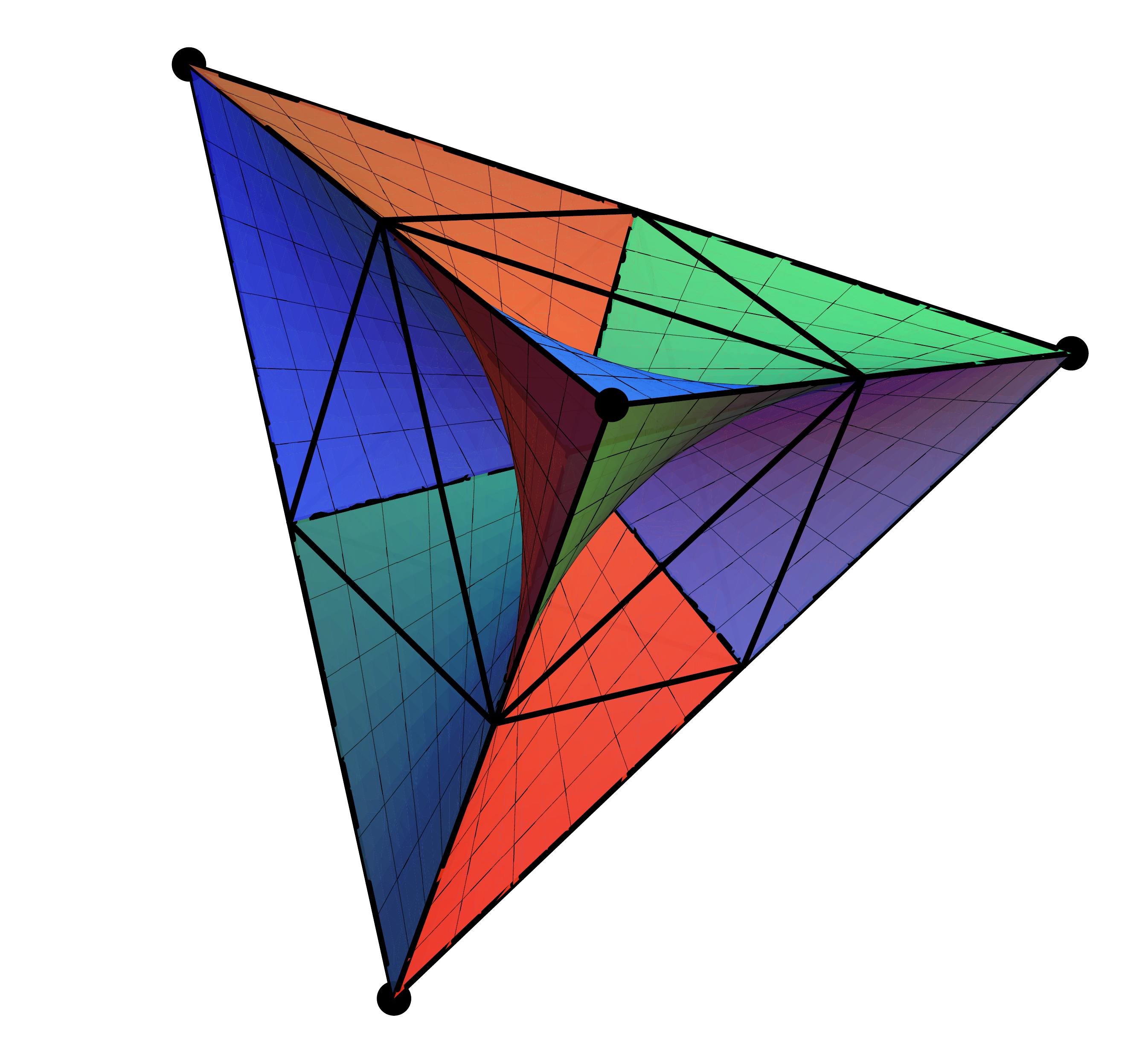} \!\!\!\!\!\!
\includegraphics[scale=.25,trim=220 220 180 0, clip=true]{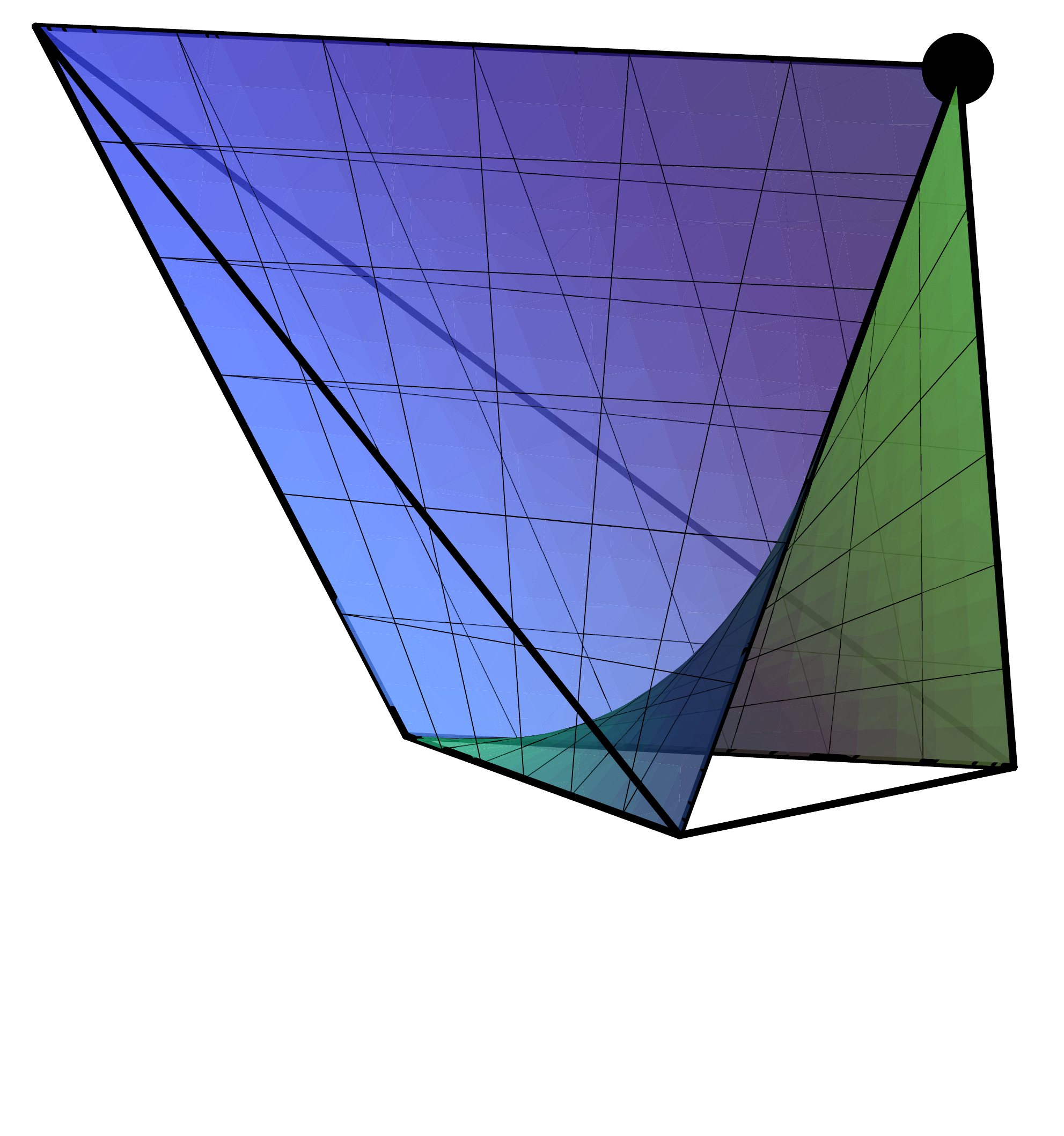}
\caption{Jukes-Cantor slice showing $2\times 2\times 2$ tensors of
non-negative rank $\leq
2$. Each toric cell is bounded by three quadrics and contains a vertex
of the tetrahedron.}
\end{figure}

\section{The General Case}

Before embarking on the general proofs of Theorems \ref{theorem:eins}
and \ref{theorem:zwei}, let us briefly go over an example that
exhibits the general behavior.

\begin{exmp} \label{ex:333eins} Consider the semialgebraic set
  $\mathcal{M}$ of $3 \times 3 \times 3$-tensors of nonnegative rank
  $\leq 2$. The Zariski closure $\overline{\mathcal{M}}$ of
  $\mathcal{M}$ in $\R_{\geq 0}^{3 \times 3 \times 3}$ has dimension
  $14$ and is defined by $222$ cubic equations \cite[Table 3]{GSS},
  namely $3 \times 3$-minors of the $3 \times 9$-matrices ${\rm
    Flat}_{1|23}(P)$, ${\rm Flat}_{2|13}(P)$ and ${\rm
    Flat}_{3|13}(P)$.  The model $\mathcal{M}$ decomposes into $108$
  toric cells $\mathcal{M}_\pi$, each defined in
  $\overline{\mathcal{M}}$ by $162$ quadratic binomial inequalities of
  the form (\ref{eq:pisubm}).

  A quick way to generate these inequalities, for $\pi = ({\rm id},
  {\rm id}, {\rm id})$, is to run the following code in the computer
  algebra system {\tt Macaulay2} \cite{M2}:
\begin{small}
\begin{verbatim}
R = QQ[p111,p112,p113,p121,p122,p123,p131,p132,p133,
       p211,p212,p213,p221,p222,p223,p231,p232,p233,
       p311,p312,p313,p321,p322,p323,p331,p332,p333];
S = QQ[a1,a2,a3,b1,b2,b3,c1,c2,c3];
f=map(S,R,{a1*b1*c1,a1*b1*c2,a1*b1*c3,a1*b2*c1,a1*b2*c2,a1*b2*c3,
a1*b3*c1,a1*b3*c2,a1*b3*c3,a2*b1*c1,a2*b1*c2,a2*b1*c3,a2*b2*c1,
a2*b2*c2,a2*b2*c3,a2*b3*c1,a2*b3*c2,a2*b3*c3,a3*b1*c1,a3*b1*c2,
a3*b1*c3,a3*b2*c1,a3*b2*c2,a3*b2*c3,a3*b3*c1,a3*b3*c2,a3*b3*c3});
gens gb kernel f
\end{verbatim}
\end{small}
Being  $\pi$-supermodular means that each of the binomials in the
resulting Gr\"obner basis, such as {\tt p223*p312-p212*p323}, must be non-positive.

The algebraic boundary of $\mathcal{M}$ has nine irreducible
components, corresponding to the nine slices of $P$. It is instructive
to see how our $162$ hypersurfaces, like $ \{ p_{223} p_{312} =
p_{212} p_{323} \} \cap \overline{\mathcal{M}}$, break into these
components.  Each individual toric cell $\mathcal{M}_\pi$ has $18$
irreducible components in its algebraic boundary: now also the $9$
double-slices kick in. The intersection of all $108$ toric cells is
the {\em Segre variety} of rank $1$ tensors, whose reverse
lexicographic Gr\"obner basis we identified with
(\ref{eq:pisubm}). \qed
\end{exmp}

A {\em marginalization} of $P$ is any tensor obtained from $P$ by
summing all slices for some fixed indices.   For instance,
the $2 {\times} 2$-matrix $(p_{ij+})$ is a marginalization of the
$2 {\times} 2 {\times} 2$-tensor $P = (p_{ijk})$.  
The following
lemma, whose proof is delayed, will be useful in proving Theorem
\ref{theorem:eins}.  
  
\begin{lem}\label{lem:margins}
All marginalizations of a supermodular tensor are supermodular,
and ditto for $e$-supermodular with
$e = ({\rm id}, {\rm id},\ldots,  {\rm id})$. 
In addition, all flattenings of a supermodular tensor are supermodular.
\end{lem}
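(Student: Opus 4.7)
The plan is to dispatch the marginalization and flattening assertions separately: the marginalization requires the Ahlswede--Daykin four-function inequality, while the flattening statement is essentially a tautology on the natural product lattice.

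For the marginalization, fix $\pi$ witnessing the supermodularity of $P$, and view $L = \prod_{r=1}^n [d_r]$ as a finite distributive lattice with the coordinatewise order induced by the $\pi_r$. The condition that $P$ is $\pi$-supermodular is then precisely the statement $P(x) P(y) \leq P(x \wedge y) P(x \vee y)$ for all $x, y \in L$. Every marginalization is an iteration of single-coordinate summations, so it suffices to prove $\pi'$-supermodularity of $Q_I = \sum_{i_s} P_{(I, i_s)}$, where $\pi'$ is obtained from $\pi$ by deleting the $s$-th entry. For arbitrary indices $I, J$ in the reduced set, I would apply Ahlswede--Daykin to $L$ with $\alpha = \beta = \gamma = \delta = P$ and to the subsets $A = \{I\} \times [d_s]$ and $B = \{J\} \times [d_s]$. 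Since $[d_s]$ is a chain under $\pi_s$, one verifies the set-wise identities $A \vee B = \{I \vee J\} \times [d_s]$ and $A \wedge B = \{I \wedge J\} \times [d_s]$. Ahlswede--Daykin then yields
\[
Q_I Q_J \;=\; P(A)\, P(B) \;\leq\; P(A \vee B)\, P(A \wedge B) \;=\; Q_{I \vee J}\, Q_{I \wedge J},
\]
which is the desired $\pi'$-supermodularity of $Q$. The claim for $e$-supermodular tensors is the special case $\pi = e$, so the same argument applies verbatim.

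For the flattening, a flattening along $A \subseteq [n]$ is merely a relabeling that groups coordinates in $A$ as row indices and those in $[n]\setminus A$ as column indices; it does not alter tensor entries. The componentwise meet and join on the product lattice $(\prod_{r \in A}[d_r]) \times (\prod_{r \notin A}[d_r])$ coincide with those on $\prod_{r=1}^n [d_r]$, so the log-supermodular inequality for the flattened matrix, read on its natural product order, is identical to the $\pi$-supermodularity of $P$. The flattening therefore inherits supermodularity at once.

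The main obstacle lies in the first part: checking the set-wise identities $A \vee B = \{I \vee J\} \times [d_s]$ and $A \wedge B = \{I \wedge J\} \times [d_s]$ cleanly. This is elementary but depends crucially on the fact that the chain $[d_s]$ satisfies $[d_s] \vee [d_s] = [d_s] = [d_s] \wedge [d_s]$ as subsets of itself; without the chain hypothesis, one would instead obtain only an inclusion and AD could not be invoked directly. Once this identification is in place, Ahlswede--Daykin does all the remaining work, and no further calculation is required.
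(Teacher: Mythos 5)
Your marginalization argument coincides with the paper's own: both invoke the Ahlswede--Daykin theorem on the lattice $\prod_r [d_r]$ built from $\pi$, applied to the collections $\cC = \{(i_1,\bi'): i_1 \in [d_1]\}$ and $\cC' = \{(i_1,\mathbf{j}'): i_1 \in [d_1]\}$ (your $A, B$). The set-wise identities $A \vee B = \{I\vee J\}\times[d_s]$ and $A\wedge B = \{I\wedge J\}\times[d_s]$ hold for the reason you give, and this part is correct and matches the paper exactly.

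Your flattening argument also mirrors the paper's, but both skip the same non-trivial step, which I'd flag. The lemma asserts the flattening $Q$ is $\tilde\pi$-supermodular, which by the paper's definition requires a tuple of permutations $\tilde\pi_s$ of $[h_s]$, i.e.\ a \emph{chain} order on each flattened index set $[h_s] = \prod_{j\in A_s}[d_j]$. The ``natural product order'' you invoke on $[h_s]$ is not a chain when $|A_s|\ge 2$: it has incomparable pairs. For $\alpha_s, \beta_s$ incomparable in the product order but necessarily comparable under any chain $\tilde\pi_s$, the $\tilde\pi$-supermodularity condition forces $\gamma_s, \delta_s \in \{\alpha_s,\beta_s\}$ (a whole-block swap), whereas the inequality inherited from $P$'s supermodularity uses the coordinatewise meet and join \emph{inside} the block $A_s$ and lands at indices outside $\{\alpha_s,\beta_s\}$. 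So ``log-supermodular with respect to the inherited product lattice,'' which is what your relabeling argument gives, is genuinely weaker than $\tilde\pi$-supermodularity as defined in the paper. The paper's own proof has the identical gap (it asserts $\pi$ ``induces'' a tuple $\tilde\pi$ without verifying this passage), so your proposal is not less rigorous than the source — but the block-swap versus coordinatewise-swap distinction is exactly what needs an argument, and it is worth being explicit that you have proven a weaker statement than what was claimed.
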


In this lemma, and in the remainder of the paper, we use the term {\em flattening} to 
include all tensor flattenings, not just the matrix flattenings described in the introduction.
We now prove our first main result.

\begin{proof}[Proof of Theorem \ref{theorem:eins}]
  Suppose first that $P$ has nonnegative rank $\leq 2$.  Then
  $P$ has the form (\ref{eq:nrank}) with
  $a_i,b_i \in \R_{\geq 0}^{d_i}$. As tensor rank cannot increase
  under flattening, we conclude that $P$ has flattening rank $\leq 2$.  
  
   Consider the
 $d_i \times 2$-matrix with columns $a_i,b_i$.  By swapping
  rows we can make all $2 \times 2$-subdeterminants of these $n$
  matrices $(a_i,b_i)$ nonnegative. But swapping rows in these matrices corresponds to acting on $P$ by
  $\pi$, where $\pi P :=[p_{\pi^{-1}(\bi)}]$ for
  $\bi=(i_1,\ldots,i_n)$ and
  $\pi^{-1}(\bi)=(\pi^{-1}_1(i_1),\ldots,\pi^{-1}_n(i_n))$. Since $P
  \in \mathcal{M}_e$ if and only if $\pi P\in \mathcal{M}_\pi$, it
  suffices to prove the following: if $P$ has the form (\ref{eq:nrank})
  with $a_{ik}b_{il}\geq a_{il}b_{ik}$ for every $i$ and all $k\leq l$
  then $P \in \mathcal{M}_e$.
 
 To prove this we define an auxiliary  
 $2{\times} d_1 {\times} \cdots {\times} d_n$ tensor $\hat P$ by
$$
\hat p_{1 i_1 \cdots  i_n}=a_{1i_1} a_{2 i_2}\cdots a_{ni_n}
\quad \hbox{and} \quad \hat p_{2 i_1 \cdots i_n}=b_{1i_1} b_{2 i_2}\cdots b_{ni_n}.
$$
We claim that $\hat P$ is $e$-supermodular. For this, we  need to check that
\begin{equation}
\label{eq:needtocheck}
\hat p_{i_0i_1\cdots i_n}\hat p_{j_0j_1\cdots j_n}\quad \leq\quad \hat p_{k_0k_1\cdots k_n}\hat p_{l_0l_1\cdots l_n},
\end{equation}
for all $i_r, j_r$ such that $k_r=\min\{i_r,j_r\}$, $l_r=\max\{i_r,j_r\}$. 
This holds with equality if $i_0=j_0$. If $i_0\neq j_0$ we have two cases to consider,
and our claim (\ref{eq:needtocheck}) is equivalent to
the inequality
$$
\max\{a_{1j_1}b_{1i_1}\cdots
a_{nj_n}b_{ni_n},\,\,a_{1i_1}b_{1j_1}\cdots
a_{ni_n}b_{nj_n}\} \,\,\, \leq\,\,\,
 a_{1k_1}b_{1l_1}\,\cdots\,a_{nk_n}b_{nl_n}.
$$
Our assumption on the
$2 \times 2$-subdeterminants of $(a_r,b_r)$ ensures
$$\max\{a_{rj_r}b_{ri_r},a_{ri_r}b_{rj_r}\}\leq  a_{rk_r}b_{rl_r}\qquad\mbox{for every }r\in [n].$$
This gives the desired inequality, and therefore $\hat P$ is $e$-supermodular.    
But $P$ is a marginalization of $\hat P$ because
 $p_{i_1 \cdots i_n}=\hat p_{1 i_1 \cdots i_n}+\hat p_{2 i_1 \cdots i_n}$,
so Lemma \ref{lem:margins} then implies that $P$ is $e$-supermodular.

For the converse, consider any supermodular $d_1\times \cdots \times d_n$ tensor $P$ of
 flattening rank $\leq 2$.
Let $F_i$ be the flattening of $P$ given by the partition $\{i\}, [n]\backslash \{i\}$. 
Suppose $\rank (F_i) <2$ for some $i$, say $i=1$.
Then $P = v\otimes P'$ for some 
$v \in \R_{\ge0}^{d_1}$ and  $P' = [p_{+i_2\cdots i_n}]$. 
By Lemma \ref{lem:margins},
the marginalization
 $P'$ is supermodular with flattening rank $\le 2$.
By repeated application of this argument, we may reduce to tensors $P$ whose $d_i \times (d_1\cdots d_{i-1} d_{i+1} \cdots d_n)$-flattenings
 $F_i$ all have rank exactly $2$. 

We next reduce to  tensors of format $2\times\cdots\times 2$. Let
 $P$ be a supermodular $d_1\times \cdots \times d_n$ tensor
 all of whose flattenings are of rank $2$, and $L_i \subseteq \R^{d_i}$ the span of the
 columns of a flattening $F_i$. Two suitable columns of $F_i$ give a
 nonnegative basis $\{a ,b\}$ of $L_i$. We modify this
 basis to $\{a',b'\}$ so that, after permuting
 entries, it is nonnegative and
 \begin{align*}
a' &= (1,0,*,\ldots,*), \\
b' &= (0,1,*,\ldots,*).
\end{align*}
To obtain this nonnegative basis first set $a''=a-tb$, using the maximal 
$t$ for which $a''$ is
nonnegative.  Then set $b''=b-sa''$ with the
maximal $s$ for which $b''$ is nonnegative. The vectors
$a'',b''$ each have an entry of 0 in a position where
the other does not. Rescaling so the non-zero entries in these
positions become 1, and permuting entries to bring these positions to
the first two, we obtain the desired $a',b'$.

Now every column of $F_i$ is in the nonnegative span of 
$a', b'$.  More concretely, we have $\,F_i=C_i ^T \cdot F_i'$,
where $C_i$ has rows $a',b'$, and $ F_i'$ is the
first two rows of $F_i$.  
On tensors, this is expressed by
$$P\,=\,P'*_iC_i,$$ 
where $P'$ is the double slice of $P$ with
$i$th index in $\{1,2\}$ and 
$P'*_iC_i$ denotes the linear action of $C_i$ on the
$i$th index of $P'$.  
Applying this construction in each index we
find (after suitable relabelings) that
\begin{equation}P\,=\,P_0 * (C_1,\ldots, C_n),\label{eq:kkkto222}
\end{equation}
where $P_0$ is the $2{\times} \cdots{\times} 2$ subtensor of $P$
obtained by restricting all indices to $\{1,2\}$, and the $2\times
d_i$-matrices $C_i$ are real and nonnegative.

Our hypotheses ensure that $P_0$ is supermodular with all flattening ranks
$2$. Moreover, if $P_0$ has nonnegative rank $2$, then it follows from
equation \eqref{eq:kkkto222} that $P$ also has nonnegative rank
$2$. Explicitly, if $P_0=a_1\otimes \cdots \otimes a_n+b_1\otimes
\cdots\otimes b_n$ is a nonnegative decomposition, then $\,P=\tilde
a_1\otimes \cdots\otimes \tilde a_n+\tilde b_1\otimes \cdots\otimes
\tilde b_n\,$ with $\,\tilde a_i=a_i C_i$, $\tilde b_i=b_i C_i$
nonnegative.

It remains to show the result for $2\times \cdots\times 2$ tensors.
Let $P'$ denote the $2\times2\times 2^{n-2}$ flattening of $P$ from
the tripartition $\{1\},\{2\},[n]\backslash \{1,2\}$. By Lemma
\ref{lem:margins}, $P'$ is supermodular.  By Proposition
\ref{prop:main43}, each $2\times 2 \times 2$ subtensor of $P'$ has
nonnegative rank $\leq 2$. 
The argument of the last three paragraphs  implies that $P'$
itself has nonnegative rank $\leq 2$, so 
 $$P'\,\,=\,\,a_1\otimes a_2\otimes a_3\,+\,b_1\otimes b_2\otimes b_3,$$
 with $a_1,a_2,b_1,b_2 \in \R_{\ge0}^2$, $a_3,b_3\in \R_{\ge0}^{2^{n-2}}$.
 The matrices  $A = (a_1,b_1)^T$ and $B=(a_2,b_2)^T$ are invertible,
by our assumptions on the $2\times 2^{n-1}$ flattening ranks of $P$.
Acting on the tensor $P$ by their inverses, we get
$$ \tilde P\,= \,e_1\otimes e_1\otimes N_1+e_2\otimes e_2\otimes N_2,$$
where $N_1,N_2$ are nonnegative tensors whose vector flattenings are $a_3,b_3$.

Consider any bipartition $A, B$ of $\{3,\ldots,n\}$. The $2^{|A|+1}\times
2^{|B|+1}$ flattening of $\tilde P$ using the bipartition $\{1\}{\cup}
A,\,\{2\} {\cup} B$ is block-diagonal, with blocks given by $A|B$
flattenings of $N_1,N_2$. This $2^{|A|+1}\times 2^{|B|+1}$
matrix has rank $\leq 2$, so either both flattenings of $N_i$ have rank
$\leq 1$, or one $N_i$ is zero. But $N_i=0$ is impossible since that
would mean some $2\times 2^{n-1}$ flattening of $P$ has rank
$1$. Hence the $A|B$ flattenings of $N_1,N_2$ have rank 1. Since $A,B$
were arbitrary, both $N_i$ have (nonnegative) rank
$1$. Consequently, $\tilde P$ has nonnegative rank $2$, and so does~$P$.
\end{proof}

It remains to prove Lemma \ref{lem:margins}. We shall use the
{\em Four Function Theorem} of Ahlswede and Daykin \cite{daykin78},
here presented in a special case:

\begin{prop}\label{th:4fun}{\rm [Ahlswede-Daykin]}
Fix $n\geq 2$ and a nonnegative  $d_1{\times} \cdots{\times} d_n$-tensor
  $P=[p_{i_1\ldots i_n}]$. For any collection $\cC$ of indices $\bi=(i_1,\ldots,i_n)$
   in $[d_1]\times \cdots\times [d_n]$ define $p_{\cC}=\sum_{\bi\in \cC}p_\bi$. 
Suppose that
\begin{equation}\label{eq:ADcond}
p_\bi \cdot p_\mathbf{j} \,\,\leq \,\,p_{\bi\vee \mathbf{j}} \cdot p_{\bi \wedge \mathbf{j}}
\quad
\hbox{for any two indices $\bi,\mathbf{j}$},
\end{equation}
where $\vee$, $\wedge$ are join and meet operations that gives
$[d_1]\times\cdots\times [d_n]$ a lattice structure.  Then for any two
collections $\cC, \cC'$, we have
$$
p_\cC \cdot p_{\cC'} \,\,\leq \,\, p_{\cC\vee \cC'} \cdot p_{\cC\wedge\cC'},
$$
where 
$\,
\cC\vee\cC'=\{\bi\vee \mathbf{j}:\, \bi\in \cC, \mathbf{j}\in \cC'\}\,$ and $\,\,
\cC\wedge\cC'=\{\bi\wedge \mathbf{j}:\, \bi\in \cC, \mathbf{j}\in \cC'\}$.
\end{prop}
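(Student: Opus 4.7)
My plan is to deduce the proposition from the classical four-function formulation of Ahlswede--Daykin, and to prove that formulation by induction on the size of a Boolean lattice.

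\emph{Reformulation.} The four-function theorem asserts: if $f,g,h,k:L\to\R_{\geq 0}$ on a finite distributive lattice $L$ satisfy $f(\bi)g(\mathbf{j})\leq h(\bi\vee\mathbf{j})k(\bi\wedge\mathbf{j})$ for all $\bi,\mathbf{j}$, then $f(X)g(Y)\leq h(X\vee Y)k(X\wedge Y)$ for all $X,Y\subseteq L$, where $f(X):=\sum_{\bi\in X}f(\bi)$. Our statement is the specialization $f(\bi)=p_\bi\mathbf{1}_{\bi\in\cC}$, $g(\bi)=p_\bi\mathbf{1}_{\bi\in\cC'}$, $h(\bi)=p_\bi\mathbf{1}_{\bi\in\cC\vee\cC'}$, $k(\bi)=p_\bi\mathbf{1}_{\bi\in\cC\wedge\cC'}$, with $X=Y=L$: if $\bi\in\cC$ and $\mathbf{j}\in\cC'$ then $\bi\vee\mathbf{j}\in\cC\vee\cC'$ and $\bi\wedge\mathbf{j}\in\cC\wedge\cC'$ by construction, so the pointwise hypothesis on $f,g,h,k$ reduces to (\ref{eq:ADcond}); otherwise the left side vanishes.

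\emph{Reduction to $\{0,1\}^N$ and base case.} By Birkhoff's representation theorem, any finite distributive lattice, including $L=[d_1]\times\cdots\times[d_n]$, embeds as a $(\vee,\wedge)$-preserving sublattice of some Boolean cube $\{0,1\}^N$; extending $f,g,h,k$ by zero off the image preserves the pointwise hypothesis. So it suffices to prove the four-function theorem on $\{0,1\}^N$, which I would do by induction on $N$. For $N=1$, the pointwise bounds $f_0g_0\leq h_0k_0$ and $f_1g_1\leq h_1k_1$ reduce the claim $(f_0+f_1)(g_0+g_1)\leq(h_0+h_1)(k_0+k_1)$ to $f_0g_1+f_1g_0\leq h_0k_1+h_1k_0$. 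Setting $A:=f_0g_1$, $B:=f_1g_0$, $C:=h_0k_1$, $D:=h_1k_0$, the remaining hypotheses give $A,B\leq D$ and $AB=(f_0g_0)(f_1g_1)\leq(h_0k_0)(h_1k_1)=CD$; the desired $A+B\leq C+D$ then follows by a short case split on whether $C\geq D$ (trivial, since $A+B\leq 2D\leq C+D$) or $C<D$ (the constrained maximum of $A+B$ is attained at $A=D$, $B=C$).

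\emph{Inductive step.} For $N\geq 2$, write $\{0,1\}^N=\{0,1\}\times\{0,1\}^{N-1}$ and marginalize in the first coordinate: for $w\in\{0,1\}^{N-1}$, set $F(w):=\sum_{a:(a,w)\in X}f(a,w)$ and define $G,H,K$ analogously relative to $Y$, $X\vee Y$, $X\wedge Y$. Then $F(\{0,1\}^{N-1})=f(X)$ and similarly for $G,H,K$, so the conclusion on $\{0,1\}^N$ reduces to $F(\{0,1\}^{N-1})G(\{0,1\}^{N-1})\leq H(\{0,1\}^{N-1})K(\{0,1\}^{N-1})$. For this, I need the pointwise hypothesis $F(u)G(v)\leq H(u\vee v)K(u\wedge v)$ for $F,G,H,K$ on $\{0,1\}^{N-1}$, which follows from the $N=1$ base case applied to the fiber functions $a\mapsto f(a,u)\mathbf{1}_{(a,u)\in X}$, $b\mapsto g(b,v)\mathbf{1}_{(b,v)\in Y}$, and the analogues for $h,k$; their pointwise hypothesis is inherited from the lattice hypothesis on $L$, since $(a,u)\in X$ and $(b,v)\in Y$ force $(a\vee b,u\vee v)\in X\vee Y$ and $(a\wedge b,u\wedge v)\in X\wedge Y$. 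The main obstacle is this fiberwise bookkeeping---the base case itself is an elementary constrained optimization, and once the marginalized four functions are set up correctly the induction closes automatically.
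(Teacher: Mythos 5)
The paper does not actually prove this proposition: it is stated as a cited special case of the Four Function Theorem of Ahlswede and Daykin, with the bibliography entry \cite{daykin78} serving as the ``proof.'' So there is no proof in the paper to compare against; you have supplied a proof from first principles where the authors chose to invoke a standard reference. Your argument is correct and is essentially the textbook proof of Ahlswede--Daykin: you first observe that the paper's formulation (summing $p$ over the two index collections $\cC,\cC'$) is the classical four-function statement specialized to $f=p\cdot\mathbf{1}_\cC$, $g=p\cdot\mathbf{1}_{\cC'}$, $h=p\cdot\mathbf{1}_{\cC\vee\cC'}$, $k=p\cdot\mathbf{1}_{\cC\wedge\cC'}$ with $X=Y=L$, and the pointwise hypothesis for these indicator-weighted functions reduces to \eqref{eq:ADcond} because $\cC\vee\cC'$ and $\cC\wedge\cC'$ are defined exactly to absorb the relevant joins and meets; you then reduce the distributive lattice $[d_1]\times\cdots\times[d_n]$ to a Boolean cube (for a product of chains this embedding is especially concrete, $i\mapsto 1^{i-1}0^{d-i}$ in each factor, so Birkhoff is not even strictly needed), and run the induction on cube dimension. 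The base case computation with $A=f_0g_1$, $B=f_1g_0$, $C=h_0k_1$, $D=h_1k_0$, where you use $A,B\le D$ and $AB\le CD$ to deduce $A+B\le C+D$, is correct; the step ``constrained maximum attained at $A=D,B=C$'' can be justified by noting $t\mapsto t+CD/t$ is convex on $(0,\infty)$ and equals $C+D$ at both endpoints of $[C,D]$, or equivalently $(A-C)(A-D)\le0$ rearranges to $A+CD/A\le C+D$. The inductive step via fiberwise application of the $N=1$ case to produce the pointwise hypothesis for the marginalized functions is the standard argument and is sound. Including such a proof would make the paper self-contained at this point, at the cost of a page; the authors' choice to cite instead is reasonable given the result's classical status.
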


\begin{proof}[Proof of Lemma \ref{lem:margins}]Let $P$ be a
  supermodular $d_1{\times} \cdots {\times} d_n$-tensor. For the first
  assertion, it suffices to show that $P'=[p_{+i_2\cdots i_n}]$ is
  supermodular. The general statement for marginal tensors follows by
  induction.
 
  If $P$ is $\pi$-supermodular, define the lattice structure on
  $[d_1]\times \cdots\times [d_n]$ by taking $\mathbf k=\mathbf
  i\wedge \mathbf j$ if and only if $\pi(\mathbf k)$ is the
  coordinatewise minimum of $\pi(\mathbf i)$ and $\pi(\mathbf j)$.
  Similarly, $\mathbf l=\mathbf i\vee \mathbf j$ if and only if
  $\pi(\mathbf l)$ is the coordinatewise maximum of $\pi(\mathbf i)$
  and $\pi(\mathbf j)$.  Fix $\,\bi',\mathbf j'\in [d_2]\times
  \cdots\times [d_n]\,$ and set
$$
\cC=\{(i_1,\bi'):\, i_1\in [d_1]\},\qquad \cC'=\{(i_1,\mathbf j'):\, i_1\in [d_1]\}.
$$
We have $p_{\cC}=\sum_{\bi\in \cC}p_\bi=p_{+\bi'}$ and $p_{\cC'}=p_{+\mathbf j'}$.
The tensor $\pi P=(p_{\pi^{-1}(\bi)})$ is $e$-supermodular. 
Proposition \ref{th:4fun} now gives
$$ p_{+\mathbf i'} \cdot p_{+\mathbf j'}
\,\,\leq \,\,p_{+(\mathbf i'\wedge \mathbf j')} \cdot p_{+(\mathbf i'\vee \mathbf j')}.$$
This means that
  $P'$ is $\pi'$-supermodular, where $\pi'=(\pi_2,\ldots,\pi_n)$.
  
  We now prove that every flattening of $P$ is supermodular. Let
  $Q=[q_{\alpha_1\cdots \alpha_r}]$ be a flattening of $P$
  corresponding to the partition $A_1,\ldots,A_r$ of
  $\{1,\ldots,n\}$. Let $h_i=\prod_{j\in A_i}d_j$, then
  $\boldsymbol\alpha=(\alpha_1,\ldots,\alpha_r)\in [h_1]\times
  \cdots\times [h_r]$. Without loss of generality we can assume that
  $\alpha_i$ indexes elements of $\prod_{j\in A_i} [d_j]$ ordered
  lexicographically. Every $q_{\boldsymbol\alpha}$ is equal to $p_\bi$
  for some $\bi$, so that each $\boldsymbol\alpha$ corresponds to a
  unique $\bi$. Since $P$ is supermodular, there exists
  $\pi=(\pi_1,\ldots,\pi_n)$ such that for every $\bi, \mathbf{j}$ we
  have $p_\bi p_\mathbf{j}\leq p_{\bi\wedge \mathbf{j}}p_{\bi\vee
    \mathbf{j}}$, where $\bi\wedge \mathbf{j}$ and $\bi\vee
  \mathbf{j}$ is as defined in the previous paragraph.
  
   Define now $\boldsymbol\alpha
  \wedge \boldsymbol\beta$ and $\boldsymbol\alpha \vee
  \boldsymbol\beta$ to be the $r$-tuples corresponding to $\bi\wedge
  \mathbf{j}$ and $\bi\vee \mathbf{j}$. The permutation $\pi$ induces the
  corresponding $r$-tuple of permutations
  $\tilde\pi=(\tilde\pi_1,\ldots,\tilde\pi_r)$ such that
  $\pi(\bi)=\tilde\pi(\boldsymbol\alpha)$. By construction, we have
  $p_{\boldsymbol\alpha}p_{\boldsymbol\beta}\leq p_{\boldsymbol\alpha
    \wedge \boldsymbol\beta}p_{\boldsymbol\alpha \vee
    \boldsymbol\beta}$, where $\tilde{\pi}(\boldsymbol\alpha \vee
  \boldsymbol\beta)\leq \tilde\pi(\boldsymbol\alpha \vee
  \boldsymbol\beta)$. This implies that $Q$ is $\tilde\pi$-supermodular.
\end{proof}

We now prove the second theorem stated in the Introduction.

\begin{proof}[Proof of Theorem \ref{theorem:zwei}]
The formula (\ref{eq:nrank}) defines a polynomial map
$$\phi :  \R^{2(d_1+d_2+\cdots+d_n)}  \rightarrow
\R^{d_1 \times d_2 \times \cdots \times d_n} $$ such that $\mathcal{M}
= \phi\bigl(\R^{2(d_1+\cdots d_n)}_{\geq 0} \bigr)$ is the set of
tensors of nonnegative rank $\leq 2$.  We modify the domain by assuming the coordinate sums of all $a_i$ and $b_i$  are $1$, while adding two mixture parameters $s,t$ as in
(\ref{eq:nrank2P3}). This does not change the image, but makes the map generically 2-to-1.
More specifically, $\phi$ is 2-to-1 on the
open set where $st\ne0$ and each pair $a_i,b_i$ is linearly independent.  Since this open set intersects the coordinate
hyperplane $\{a_{ij} = 0\}$ (or $\{b_{ij} = 0\}$), the map $\phi$ is generically finite on that hyperplane.
Hence the closure of the image $\phi(\{a_{ij} = 0\})$ is an
irreducible subvariety of codimension $1$ in $\overline{\mathcal{M}}$.
Moreover, in any neighborhood of a point on $\{a_{ij} = 0\}$ there are
points with $a_{ij}<0$ that are not mapped into the interior of $\mathcal{M}$.
Indeed,
generically the fiber containing such a point only contains its image
under label swapping, and thus all points in the fiber have a negative
coordinate.  Thus $\phi(\{a_{ij} = 0\})$ is a component of the
algebraic boundary of $\mathcal{M}$.

By restricting to open subsets $\mathcal{U}_\pi$ where the signs of
all $2 \times 2$-minors of the matrices $(a_i,b_i)$ are fixed, we see
that $\overline{\phi(\{a_{ij}=0\})}$ is also a component in the
algebraic boundary of $\mathcal{M}_\pi$.  Additional pieces of the
boundary of $\mathcal{U}_\pi$ are the quadrics $\{a_{ij} b_{ik} =
a_{ik} b_{ij}\}$, on whose general points the map $\phi$ is also
2-to-1.  Therefore the varieties $\overline{\phi(\{a_{ij}b_{ik} =
  a_{ik} b_{ij} \})}$ are irreducible of codimension $1$ in
$\overline{\mathcal{M}_\pi}$, and, by the same argument as above, they
are also components of the algebraic boundary of $\mathcal{M}_\pi$.

We next argue that there are no components in the algebraic boundary of
$\mathcal{M}$ or $\mathcal{M}_\pi$ other than the two types we just
identified.  This follows from Theorem \ref{theorem:eins}. 
Let $P \in \partial \mathcal{M}_\pi$.
Consider the binomials $\, p_{i_1 i_2 \cdots i_n}  p_{j_1 j_2 \cdots j_n} \,-\,
p_{k_1 k_2 \cdots k_n}  p_{l_1 l_2 \cdots l_n}\,$ 
that correspond to facets of the polyhedral cone of supermodular functions. 
For such a facet binomial,
the indices in the four appearing unknowns $p_{\bullet}$ agree in all but two of the positions.
All other binomials (\ref{eq:pisubm}) admit representations such as
(\ref{eq:identitiessuchas}).
 The expansion of a facet binomial into parameters $a_{ij}, b_{ij}$
factors into coordinates and $2 \times 2$-determinants
as in (\ref{eq:binomialfactors}).
Hence, 
at the two points in $\phi^{-1}(P)$,
one of these factors must vanish, and this implies
that $P$ lies on one of the hypersurfaces we already identified above.

We finally identify $\overline{\phi(\{a_{ij} = 0\})}$
and $\overline{\phi(\{a_{ij}b_{ik} = a_{ik} b_{ij} \})}$
with the rank loci described in the statement of Theorem \ref{theorem:zwei}.
If the coordinate $a_{ij} $ vanishes then the
$j$-th slice of $P$ in the $i$-th dimension drops its rank
from $\leq 2$ to $\leq 1$.
Likewise, if $a_{ij} b_{ik} = a_{ik} b_{ij}$, then
the $j$th and $k$th slices of $P$  in dimension $i$ becomes
linearly dependent. Hence the irreducible components of
the algebraic boundaries of $\mathcal{M}$ and $\mathcal{M}_\pi$
are uniquely characterized by lying in the following two types of rank loci:
\begin{enumerate}
\item[(a)] the variety of tensors $P$ of border rank $\leq 2$ such that
a particular slice has border rank $\leq 1$;
\item[(b)] the variety of tensors $P$ of border rank $\leq 2$
such that a particular double slice
is linearly dependent. 
\end{enumerate}
This completes the proof of Theorem \ref{theorem:zwei}.
\end{proof}

We believe that the rank loci in (a) and (b) are irreducible varieties,
and that their prime ideals are generated by the relevant
subdeterminants of format $2 \times 2$ and $3 \times 3$.
At present we do not know how to prove this.
A similar issue for tree models appears in 
Conjecture \ref{conj:tree2}.
For the case of Example \ref{ex:333eins}, we proved
irreducibility by computation:

\begin{exmp} \label{ex:333zwei} The variety $\overline{\mathcal{M}}$
  of $3 \times 3 \times 3$ tensors of border rank $\leq 2$ has
  dimension $14$ and degree $783$. Using {\tt Macaulay2} \cite{M2}, we
  verified that both (a) and (b) define irreducible subvarieties of
  dimension $13$.  The variety (a) has degree $882$, and its prime
  ideal is minimally generated by $9$ quadrics and $187$ cubics. The
  variety (b) has degree $342$, and its prime ideal is minimally
  generated by $36$ quadrics and $90$ cubics. All ideal generators can
  be chosen from the relevant subdeterminants. \qed
\end{exmp}

One may ask how efficiently the model membership can be tested. The number of facets of the submodular cone is a
polynomial in the size of the tensor, and each facet
inequality involves precisely four of the unknowns.
Hence supermodularity for positive tensors can be tested  in
polynomial time. For instance, a $2\times 2\times \cdots \times 2$-tensor has $N = 2^n$ cell
entries, and the facets correspond to the $2$-faces of the $n$-cube (see the proof of Theorem \ref{theorem:zwei}), of which
there are only $n(n-1)2^{n-3} = O(N^{1+\epsilon})$.

\section{Binary Tree Models}
In this section we study the extension of our results to 
the {\em general Markov model} $\mathcal{M}_T$
 on a phylogenetic tree $T$ with binary states
\cite{allmanrhodesold, allman2012semialgebraic, draismakuttler,klaeretripod,
pwz-2009-semialgebraictrees}.
The special case when  $T$ is a {\em star tree},
with only one internal node,  corresponds to
$2 \times 2 \times \cdots \times 2$-tensors of nonnegative rank $\leq 2$.
For arbitrary trees $T$,
Steel and Faller \cite{steelfaller} showed that
distributions in $\mathcal{M}_T$ are supermodular,
by a marginalization argument as in Lemma~\ref{lem:margins}.

We assume that $T$ has $n \geq 3$ leaves, 
 $E$ is the set of edges of $T$, and
one of the $|E|-n+1$ internal nodes is the root of $T$.
We specify each probability distribution $P$ in the model $\mathcal{M}_T$
 by a nonnegative root distribution $\pi\in
\R_{\ge0}^2$, together with a $2\times2$ Markov matrix $M_e$ for
each edge $e$, directed away from the root. The entries of
$\pi$ and of each row of each $M_e$ sum to $1$. 
These choices determine a point $\theta = \bigl(\pi,(M_e)_{e \in E}\bigr)$ in 
the cube $\,\Theta = [0,1]^{2|E|+1}$.
That cube serves as the domain for the model parametrization 
$\, \phi:\Theta \twoheadrightarrow \mathcal{M}_T \,\subset\, \R_{\geq 0}^{2 \times 2 \times \cdots \times 2}$.
This can be found in explicit form in \cite[Equation (1)]{allmanrhodesold}.
The map $\phi$ is locally identifiable. To be precise, each general fiber consists
of $2^{|E|-n+1}$ points, corresponding to label swapping on the internal nodes.
Hence our binary tree model $\mathcal{M}_T =\phi(\Theta)$
is a compact semialgebraic set of dimension $2|E|+1$
inside the probability simplex of dimension $2^n-1$.
It is known that $\mathcal{M}_T$ is independent of the
choice of the root node.

The prime ideal that defines the Zariski closure
$\overline{\mathcal{M}_T}$ is known.  It is generated by the $3 \times
3$-minors of all flattenings of $P$ that are compatible with
$T$. Here, a split $(A,A^c)$ of $[n]$ is {\em compatible}
with $T$ if the intersection of any path between two leaves in $A$
with any path between two leaves in $A^c$ is either empty or just one
internal node.  This was first shown set-theoretically for trivalent
trees by Allman-Rhodes \cite{allmanrhodesold}.  The ideal-theoretic
statement for arbitrary trees $T$ is seen by combining the result of
Draisma-Kuttler in \cite{draismakuttler} with the result of Raicu in
\cite{raicu}.

Our main result of this section concerns the algebraic boundary of the general Markov model
$\mathcal{M}_T$ inside the phylogenetic variety  
$\overline{\mathcal{M}_T}$.

\begin{thm}\label{thm:tree1}
The algebraic boundary of the binary 
tree model $\mathcal{M}_T$ has
$n+|E|$ irreducible components,
two for each of the $n$ pendant edges,
and one for each of the $|E|-n$ internal edges.
The components
are closures of images of facets of the cube $\Theta$, 
as described below.
\end{thm}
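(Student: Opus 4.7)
The plan is to follow the same strategy as in the proof of Theorem \ref{theorem:zwei}, but with the parametrization $\phi:\Theta\to\mathcal{M}_T$ in place of the star-tree parametrization. Since $\phi$ is generically finite of degree $2^{|E|-n+1}$ (the fibres given by label-swaps at the internal nodes), any irreducible codimension-one component of the algebraic boundary of $\mathcal{M}_T$ inside $\overline{\mathcal{M}_T}$ must arise as $\overline{\phi(F)}$ for some facet $F$ of the cube $\Theta$, provided $\phi$ does not carry a full neighborhood of a generic point of $F$ into the relative interior of $\mathcal{M}_T$. The argument then reduces to enumerating the facets of $\Theta$ and deciding which survive.

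The cube $\Theta$ has $4|E|+2$ facets: two root-distribution facets $\{\pi_j=0\}$ and, for each edge $e$, four facets of the form $\{m_{kl}^e=0\}$ or $\{m_{kl}^e=1\}$. First I would rule out the root-distribution facets: on $\{\pi_0=1\}$ the tree tensor $P$ factors as a single product of its leaf vectors, landing in the rank-one locus of dimension $n+1<2|E|=\dim\overline{\mathcal{M}_T}-1$, so these facets contribute no hypersurface component. For a pendant edge $e$ at leaf $i$, each of the four Markov-matrix facets forces a column of $M_e$ to be a standard basis vector, which in turn collapses one of the two slices of $P$ along the axis of leaf $i$ to a single rank-one term; the label-swap at the unique internal endpoint of $e$ acts on these four facets with two orbits of size two, and each orbit maps dominantly onto the codimension-one locus where a fixed slice of $P$ has rank $\leq 1$. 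This yields the two components per pendant edge. For an internal edge $e$, the same analysis produces a rank-degeneracy condition on the flattening of $P$ corresponding to the split induced by $e$; now label-swaps at \emph{both} endpoints of $e$ are available, fusing the four facets into a single orbit and giving exactly one boundary component per internal edge. Summing $2n$ from the pendant edges and $|E|-n$ from the internal edges recovers $n+|E|$ components in total.

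The main obstacles are twofold. First, one must establish irreducibility and codimension one of each image $\overline{\phi(F)}$ by checking that the Jacobian of $\phi|_F$ has full rank at a generic point, which is routine because the remaining parameters still identify the restricted submodel. Second, one must rule out any additional components: via Theorem \ref{theorem:eins} and the supermodularity of $\mathcal{M}_T$ following Steel--Faller, every boundary point of $\mathcal{M}_T$ must either lie on a pendant-edge slice-rank-$\leq 1$ locus or violate a binomial supermodularity relation associated with an internal edge, and an analogue of the facet-binomial factorization in (\ref{eq:binomialfactors}) ensures that each such violation corresponds to one of the facet images already identified. The most delicate technical point---and the reason the theorem yields only an enumeration of components rather than their defining prime ideals---is identifying each rank locus $\overline{\phi(F)}$ concretely as a subvariety of $\overline{\mathcal{M}_T}$; this subtlety mirrors the conjectural content of Conjecture \ref{conj:tree2}.
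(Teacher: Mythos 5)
Your facet enumeration and orbit counting are correct, and the overall plan of scanning facets of $\Theta$ under the generically finite map $\phi$ matches the paper's strategy in spirit. However, there are two genuine gaps.

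First, your dismissal of the root-distribution facets is wrong for a general tree. You claim that on $\{\pi_2=0\}$ the tensor $P$ ``factors as a single product of its leaf vectors, landing in the rank-one locus.'' That is true only for the star tree, where the root is the unique internal node. For any tree with more than one internal node, fixing the root state deterministically still leaves the joint leaf distribution a mixture over the hidden states at the remaining internal nodes, so $P$ is generically of full flattening rank, not rank one. The correct argument is the one the paper makes via Lemma~\ref{lem:sing} and Lemma~\ref{lem:fiber}: the root facets lie entirely in the singular locus $\Theta_{\rm sing}$, and any point of $\Theta_{\rm sing}$ shares a fiber with a point on a Markov-matrix facet, so the images of the root facets are absorbed into the pendant-edge components rather than being low-dimensional.

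Second, your completeness argument --- ``every boundary point either lies on a pendant-edge rank-$\le1$ locus or violates a supermodularity binomial associated with an internal edge, and a factorization like (\ref{eq:binomialfactors}) handles the rest'' --- does not go through as stated and is not the route the paper takes. Supermodularity is a necessary but not sufficient condition for membership in $\mathcal{M}_T$, and the internal-edge boundary components are characterized by rank drops of flattenings incompatible with $T$ (from $4$ to $3$), which are not supermodularity violations and do not admit a simple binomial factorization. The paper instead shows that $\phi$ restricted to $\operatorname{Int}(\Theta)\setminus\Theta_{\rm sing}$ is a submersion, so interior nonsingular parameters map to interior model points; this gives $\partial\mathcal{M}_T\subseteq\phi(\partial\Theta)\cup\phi(\Theta_{\rm sing})$, and $\phi(\Theta_{\rm sing})\subseteq\phi(\partial\Theta)$ by Lemma~\ref{lem:fiber}. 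You would need to supply a characterization of $\Theta_{\rm sing}$ (the content of Lemma~\ref{lem:sing}) and the fiber-escape argument (Lemma~\ref{lem:fiber}) before your enumeration of facets can be asserted to be exhaustive.
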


The components of the algebraic boundary of~$\mathcal{M}_T$ are as follows:
\begin{enumerate}
\item For each pendant edge $e$ with leaf $\ell$, fix one row of the
$2\times 2$-matrix $M_e$. 
(The other row gives the same two components.)
Nonnegativity of either entry
determines a facet $F$ of the cube $\Theta$. 
Then $\overline{\phi(F)}$ is a component. It has the following
{\em equational description} inside $\mathcal{M}_T$. 
If the internal node on $e$ is $r$-valent, it
gives a partition  $(L_1=\{\ell\},L_2,\ldots,L_r)$ of $[n]$.
  Flatten $P$ accordingly to a
      $2 \times 2^{|L_2|}\times 
  \cdots \times 2^{|L_r|}$ tensor.
    The rank of the 
  $1 \times 2^{|L_2|}\times 
  \cdots \times 2^{|L_r|}$ slice selected by $F$
 drops to~$\leq 1$ on $\overline{\phi(F)}$.
\item For each internal edge $e$, fix any one entry of the
$2\times 2$-matrix $M_e$. 
(The other three entries 
give the same component.)
 Nonnegativity of that entry
determines a facet $F$ of the cube~$\Theta$.
Then $\overline{\phi(F)}$ is a component. It has the following
{\em equational description} inside $\mathcal{M}_T$. 
 Let $T[e]$ be the tree obtained from $T$ by 
contracting $e$. For either matrix flattening of $P$
that is compatible with $T[e]$ but not with $T$,
the rank drops to $\leq 3$ on~$\overline{\phi(F)}$.
  \end{enumerate}

At present we do not know whether the  equational descriptions above
(in terms of tensor rank)
are enough to cut out the 
codimension $1$ subvarieties
$\overline{\phi(F)} $ of $\mathcal{M}_T$.
For this, it would suffice to prove the following:

\begin{conj}\label{conj:tree2} 
The rank varieties in (1)~and~(2)  are
irreducible.
\end{conj}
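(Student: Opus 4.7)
The plan is to prove that each rank variety in (1) and (2) coincides with the Zariski closure $\overline{\phi(F)}$ of the image of the associated facet. Once that is established, irreducibility is automatic: each facet $F$ of the cube $\Theta$ is itself an affine-linear cube, hence irreducible, and the polynomial map $\phi$ sends irreducible sets to sets with irreducible Zariski closure. The inclusion $\overline{\phi(F)} \subseteq$ (rank variety) is already implicit in the equational description accompanying Theorem \ref{thm:tree1}, so the conjecture reduces to proving the reverse inclusion.

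For case (1), I would argue directly from the Markov parametrization. Fix a pendant edge $e$ from leaf $\ell$ to an internal node $v$, let the facet be $F = \{(M_e)_{11} = 0\}$, and consider the slice of $P$ at $\ell = 1$. By the Markov property on trees, conditioning on the state of $v$ factorizes the distribution on the remaining leaves over the subtrees at $v$, so this slice decomposes as $\pi_1 (M_e)_{11}\, T^1 + \pi_2 (M_e)_{21}\, T^2$, where each $T^s$ is a \emph{rank-one} tensor with respect to the $(2^{|L_2|}, \ldots, 2^{|L_r|})$-factorization. Since $\pi$ has positive entries generically, the slice has rank $\leq 1$ iff $(M_e)_{11} = 0$, or $(M_e)_{21} = 0$, or $T^1$ and $T^2$ coincide up to scalar. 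The third alternative cuts out a subvariety of strictly smaller dimension in $\overline{\mathcal{M}_T}$, while the first two facets are interchanged by the label swap at $v$, which is a symmetry of $\phi$. Hence their $\phi$-closures agree, and the rank variety equals $\overline{\phi(F)}$.

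For case (2), fix an internal edge $e = (u, v)$ and consider a flattening $\mathrm{Flat}_{A | B}(P)$ for a split compatible with $T[e]$ but not with $T$. The same conditioning trick yields a factorization $\mathrm{Flat}_{A|B}(P) = U\, D_e\, V^T$, where $D_e$ is the $4 \times 4$ diagonal matrix with diagonal entries $\pi_s (M_e)_{st}$ indexed by the joint states $(s,t)$ at the endpoints of $e$, and $U, V$ are rectangular matrices built from the distributions on the leaves in $A, B$ conditional on $(s,t)$. Generically $U$ and $V$ have full column rank $4$, so the flattening has rank $\leq 3$ precisely when some $(M_e)_{st} = 0$. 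Since label swaps at $u$ and $v$ act transitively on the four entries of $M_e$ while preserving $\phi$, all four single-entry-vanishing loci produce the same Zariski closure, yielding again a single irreducible component matching $\overline{\phi(F)}$.

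The main obstacle is to make the generic full-column-rank hypothesis on $U$ and $V$ in case (2) rigorous and, more broadly, to exclude from the rank locus any spurious component arising from a coincidental rank collapse in the subtrees rather than from the vanishing of an $M_e$-entry. This is a prime decomposition question for a determinantal ideal pulled back to $\overline{\mathcal{M}_T}$. A promising route is induction on the number of internal edges of $T$, using the Draisma-Kuttler-Raicu description of the ideal of $\overline{\mathcal{M}_T}$: contracting edges reduces to smaller trees, and one may invoke the conjecture recursively on the matrix blocks $U$ and $V$ to control their degenerate behavior. Trivalent trees, where the factorization is most uniform, appear to be the cleanest starting point, with the general case following by a standard refinement argument as higher-valent internal nodes are resolved.
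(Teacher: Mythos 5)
The statement you are addressing is a \emph{conjecture} in the paper: the authors explicitly say ``At present we do not know whether the equational descriptions above\dots are enough to cut out the codimension $1$ subvarieties $\overline{\phi(F)}$,'' and they offer only a computational verification (via \texttt{Macaulay2} primary decomposition) in the single smallest case, the $4$-taxon trivalent tree of Example \ref{ex:12|34}. There is no general proof in the paper to compare against, and your proposal does not supply one either.

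Your high-level strategy --- show that each rank variety equals $\overline{\phi(F)}$, and then conclude irreducibility from the irreducibility of the facet $F$ and the fact that $\phi$ is a polynomial map --- is sound in outline and is precisely what the authors would like to establish. The inclusion $\overline{\phi(F)} \subseteq (\text{rank variety})$ is already proved in the paper, at the end of the proof of Theorem~\ref{thm:tree1}. The content of the conjecture is the reverse inclusion, and this is where your argument stops short. Your decomposition of the slice as $\pi_1(M_e)_{11}T^1 + \pi_2(M_e)_{21}T^2$, and the factorization $\mathrm{Flat}_{A|B}(P)=U\,D_e\,V^T$ in case (2), are correct and match equation (\ref{eq:middleflat}) of the paper. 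But these are statements about $\phi^{-1}(\text{rank variety})$: they tell you which parameter values produce low-rank slices, not that every component of the rank locus inside $\overline{\mathcal{M}_T}$ is dominated by such parameter values. A determinantal subscheme of $\overline{\mathcal{M}_T}$ can have components lying off the image of $\phi$ (or in the non-generic fibers of $\phi$), and nothing in your argument rules these out. You yourself flag this as ``the main obstacle'' and offer only a ``promising route'' via induction and the Draisma--Kuttler--Raicu ideal description; this is not a proof, it is a restatement of the open problem. In short, the genuine gap is exactly the one the paper names --- showing that the pulled-back determinantal ideal has no spurious components on $\overline{\mathcal{M}_T}$ --- and your proposal identifies it but does not close it.
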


We have a computational proof of Conjecture \ref{conj:tree2}
in the smallest non-trivial case, 
the trivalent tree on $4$ taxa, which we discuss next.
 
 \begin{exmp} \label{ex:12|34}
 Let $n=4$ and $T$ the trivalent tree with split $12|34$.
The phylogenetic variety lives in $\mathbb{P}^{15}$
and it has dimension $11$:
\begin{equation}
\label{eq:ex12|34A}
\overline{\mathcal{M}_T} \,= \,
\biggl\{ \,P \,\in \mathbb{P}^{15} \,: \, {\rm rank}
\begin{bmatrix}
p_{1111} & p_{1112} & p_{1121} & p_{1122} \\
p_{1211} & p_{1212} & p_{1221} & p_{1222} \\
p_{2111} & p_{2112} & p_{2121} & p_{2122} \\
p_{2211} & p_{2212} & p_{2221} & p_{2222} 
\end{bmatrix} \leq 2 \biggr\}.
\end{equation}
The model $\mathcal M_T$ is composed of 
eight $11$-dimensional cells $\mathcal M_\pi$, where $\pi=(\pi_1,\pi_2,\pi_3,\pi_4)$.
As before,
 $\mathcal M_{\rm id,\rm id,\rm id,\rm id}=\mathcal M_{(12),(12),(12),(12)}$.
These cells are glued together along lower-dimensional
models corresponding to forests obtained by deleting edges of the tree. 
For instance $\mathcal M_{\rm id,\rm id,\rm id,\rm id}$ is glued to 
$\mathcal M_{\rm id,\rm id,(12),(12)}$ along the model of two independent 
2-leaf trees. It is also glued to $\mathcal M_{\rm id,\rm id,\rm id,(12)}$ 
along a model of a $3$-leaf tree and an independent leaf, and similarly to 
3 other cells. Finally, it is glued to the remaining cells
$\mathcal M_{(12),\rm id,(12),\rm id}$ and $\mathcal M_{(12),\rm id,\rm id,(12)}$ 
along even more degenerate models, of a forest with one 2-leaf tree and two 
singleton leaves. All eight cells intersect in the model of four independent leaves. 
The various strata correspond to
$\mathbb{P}^3 {\times} \mathbb{P}^3$,
$\mathbb{P}^7 {\times} \mathbb{P}^1$,
$\mathbb{P}^3 {\times} (\mathbb{P}^1)^2$ and
$(\mathbb{P}^1)^4$.

The algebraic boundary of $\mathcal{M}_T$ 
has eight irreducible
 components of type (1), such as
\begin{equation}
\label{eq:ex12|34B}
\bigl\{ \,P \,\in \,\overline{\mathcal{M}_T} \,: \, {\rm rank}
\begin{bmatrix}
p_{1111} & p_{1112} & p_{1121} & p_{1122} \\
p_{1211} & p_{1212} & p_{1221} & p_{1222} 
\end{bmatrix}  \leq 1 \bigr\}. 
\end{equation}
The $2 {\times} 2$-minors of (\ref{eq:ex12|34B}) 
and $3 {\times} 3$-minors of (\ref{eq:ex12|34A})
generate a prime~ideal.

The ninth component of $\overline{\partial \mathcal{M}_T}$ comes from the internal edge
and is of type (2).
 It is defined by the $4 \times 4$-determinant of
either of the two flattenings other than (\ref{eq:ex12|34A}).
These two determinants are equal and irreducible on $\overline{\mathcal{M}_T}$,
so they give the prime ideal of that component.
\qed
 \end{exmp}
 
To prove Theorem \ref{thm:tree1}, we consider the singular locus
 $\Theta_{\rm sing}$ of the parametrization $\phi$. By definition,
 $\Theta_{\rm sing}$   is the closed subset of
the cube $\Theta$ where the rank of the Jacobian matrix
of $\phi$ drops to $2 |E|$ or below.
 
 \begin{lem}\label{lem:sing}
   $\Theta_{\rm sing}$ is the subset of points in $\Theta$ where
   either the root distribution $\pi$ has a zero entry, or
   some Markov matrix $M_e$ is singular.
 \end{lem}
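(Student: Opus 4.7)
The plan is to establish the stated equality by verifying both inclusions.

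For the inclusion $\Theta_{\rm sing} \supseteq \{\pi \hbox{ has a zero entry}\} \cup \bigcup_e \{\det M_e = 0\}$, I would exhibit an explicit nonzero element of $\ker(d\phi_\theta)$ in each case. If $\pi_2 = 0$, then $\phi$ receives no contribution from ``state~$2$'' at the root, so the entire second row of every Markov matrix $M_e$ on an edge incident to the root becomes an invisible parameter; any perturbation of such an entry lies in $\ker(d\phi_\theta)$ and forces the rank to drop. If $\det M_e = 0$ for some edge~$e$, the row-sum normalization forces both rows of $M_e$ to coincide with some $(\mu,1-\mu)$, so the hidden state on the far side of~$e$ becomes independent of the root-side hidden state. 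Consequently $\phi(\theta) = P_1 \otimes P_2$ factorizes across the bipartition of leaves induced by cutting $e$, and the two parameters $M_e(1,1)$ and $M_e(2,1)$ collapse onto the single effective parameter~$\mu$. A chain-rule computation through this factorization produces the required linear relation among the columns of $d\phi_\theta$.

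For the reverse inclusion, I would argue via local identifiability. When $\pi>0$ and every $M_e$ is nonsingular, classical results on the binary general Markov model --- Chang's theorem and its phylogenetic extensions by Allman and Rhodes --- show that $\theta$ is locally recoverable from $\phi(\theta)$ up to the finite group of label-swap symmetries at the internal nodes. Hence $\phi$ is a local $2^{|E|-n+1}$-to-one smooth covering near~$\theta$, and $d\phi_\theta$ attains its maximal rank $2|E|+1$.

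The main technical obstacle is producing the explicit kernel vector in the case $\det M_e=0$. After writing $\phi(\theta) = P_1 \otimes P_2$, one must verify a tensor identity of the shape $\partial_{M_e(1,1)}\phi + \partial_{M_e(2,1)}\phi = P_1 \otimes w$, where $w$ is realized as a tangent direction coming from the $T_2$-parameters at the cut endpoint; this furnishes the required linear dependence among Jacobian columns. The computation must also track the symmetries at the degree-two hidden node that the cut creates in the subtree models, since these control whether the collapse is genuinely codimension-one in each instance.
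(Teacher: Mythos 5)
Your proposal is correct in outline but takes a genuinely different route from the paper, especially for the harder inclusion. The paper does not invoke Chang's theorem or identifiability results at all. Instead it reduces to the single-cherry map $\psi:(a_1,b_1,a_2,b_2,a_3,b_3)\mapsto a_1\otimes a_2\otimes a_3+b_1\otimes b_2\otimes b_3$ and argues that $\psi$ is non-singular whenever each pair $a_i,b_i$ is linearly independent by a homogeneity argument: the group $G$ of block matrices satisfies $\psi(zg)=\psi(z)g$, so the Jacobian rank is constant along $G$-orbits; the orbit of any ``good'' point is dense, and since $\psi$ is generically $2$-to-$1$ it cannot be everywhere singular. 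The full tree case is then handled inductively by writing the Jacobian as a product of block matrices built from these cherry maps. What the paper's argument buys is self-containedness (no reference to reconstruction theorems) and a clean handling of the codimension bookkeeping for arbitrary trees. What your route buys is conceptual transparency, since identifiability is exactly the operational meaning of full Jacobian rank; but it carries a real burden you have not yet discharged: you must verify that Chang-style reconstruction is valid and gives a \emph{smooth} local inverse at \emph{every} point with $\pi>0$ and all $M_e$ invertible, not merely at generic points. Eigenvalue separation and invertibility of the relevant auxiliary matrices need to be established from the stated hypotheses alone, and the extension to non-star trees requires the Allman--Rhodes decomposition to be similarly pointwise.

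A second caveat concerns your easier inclusion. When $\det M_e=0$ you invoke the factorization $\phi(\theta)=P_1\otimes P_2$ and conclude that the rank drops. As stated this is not quite a proof: the factorization holds only \emph{on} the rank-one locus of $M_e$, so it bounds the Jacobian rank restricted to directions tangent to that locus; you still need to argue that the transverse direction does not supply enough new rank to reach $2|E|+1$. In fact a dimension count of the product model on either side of the cut does suffice (the product has strictly smaller dimension than $2|E|$, so even adding one transverse direction cannot reach $2|E|+1$), but this must be said explicitly and verified for pendant versus internal $e$ separately. You flag this as the main obstacle, which is appropriate; the ``obvious'' identity $\partial_{M_e(1,1)}\phi+\partial_{M_e(2,1)}\phi=P_1\otimes w$ is true, but producing it does not by itself yield a kernel vector --- you must exhibit the rest of the linear dependence against the other parameter directions, or fall back on the dimension count. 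The paper sidesteps all of this by treating the singular directions of $\psi$ as ``straightforward'' and spending its effort on the other inclusion.
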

 
 \begin{proof}  
   A tree $T_n$ with $n$ leaves is obtained by attaching a cherry to a
   leaf $\ell$ of an $(n-1)$-leaf tree $T_{n-1}$. Assuming the
   matrices $M_e$ on $T_{n-1}$ are non-singular and $\pi$ has non-zero
   entries, then the distribution for $T_{n-1}$ flattens on the edge
   incident to $\ell$ to a $2 \times 2^{n-2}$ matrix $A$ of rank
   $2$. Let $a_1, b_1$ be the rows of $A$, and $a_2, b_2$ and
   $a_3,b_3$ the rows of
   the
   matrix parameters on the edges of the cherry. Then the distribution for $T_n$,
   appropriately flattened, is $ a_1\otimes a_2\otimes a_3+b_1\otimes
   b_2\otimes b_3.$
   
   \smallskip
   
We next show the map  
$$\psi:(a_1,b_1,a_2,b_2,a_3,b_3)\mapsto a_1\otimes
   a_2\otimes a_3+b_1\otimes b_2\otimes b_3$$
where the entries of $a_2,b_2,a_3,b_3$ sum to 1,  
is non-singular precisely at points where all pairs $a_i,b_i$
are linearly independent.  That $\psi$ is singular at points where some pair
$a_i,b_i$ is dependent is straightforward. To show the rest of this claim,
we allow arbitrary real entries in the vectors, to take advantage of a group action.

Let $G$ be the subgroup of $GL(2^{n-2})\times GL(2)\times GL(2)$
consisting of matrix triples
$(g_1,g_2,g_3)$ where the rows of $g_2$ and $g_3$ sum to $1$.
The group $G$ acts on both the domain and range of $\psi$, and
intertwines as
$$\psi(zg)=\psi(z)g,\ g\in G.$$
Hence the Jacobian matrix of $\psi$ has constant rank on each
orbit. But the orbit of any point with all pairs $a_i,b_i$ linearly
independent is dense in the domain. Thus if $\psi$ were singular at
such a point, it would be singular everywhere. Since $\psi$ is
generically 2-to-1, that is impossible.  

Note that the statement of the lemma for the 3-leaf~tree follows from the previous paragraphs.
Building the tree $T_n$ inductively from $T_{n-1}$
writes the Jacobian of $\psi$ as a product of block matrices
of smaller Jacobians. From this we see that
 $\Theta_{\rm sing}$ consists of points where either $\pi$ has a zero entry, or
 some $M_e$ is singular.
   \end{proof}
 
   \begin{lem} \label{lem:fiber} If $\theta\in \Theta_{\rm sing}$, then
     the fiber of $\theta$ intersects the boundary of $\Theta$,
     i.e.~there exists $\theta'\in \partial\Theta$ with
     $\phi(\theta')=\phi(\theta)$. Moreover, $\theta'$ can be found in
     a facet of $\Theta$ where some entry of a Markov matrix is zero.
 \end{lem}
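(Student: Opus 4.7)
The plan is to use Lemma~\ref{lem:sing} to split the argument into two cases, and in each case to produce an equivalent parameter $\theta' \in \partial\Theta$ lying on a Markov-entry facet.

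\emph{Case A ($\pi$ has a zero coordinate).} Say $\pi_1 = 0$. Pick any edge $e$ emanating from the root. The first row of $M_e$ enters $\phi(\theta)$ only through the multiplicative factor $\pi_1$, which vanishes. Replacing that row by $(1,0)$ yields $\theta' \in \Theta$ with $\phi(\theta') = \phi(\theta)$, and $\theta'$ lies in the facet $\{(M_e)_{12} = 0\}$ of $\Theta$.

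\emph{Case B (some $M_e$ is singular).} Nonnegativity and stochasticity of the rows force both rows of $M_e$ to equal a common vector $(a,1-a)$. If $a \in \{0,1\}$ then $M_e$ already has a zero entry and we take $\theta' = \theta$. Assume $0 < a < 1$, and write $e = (u,v)$ with $v$ the child endpoint. The identical rows of $M_e$ make $X_v$ independent of $X_u$, and consequently $\phi(\theta)$ factors as a product $P_1 \cdot P_2$ of two tree-model distributions: $P_1$ on the subtree $T_v$ rooted at $v$ with root distribution $(a,1-a)$, and $P_2$ on the pruned tree $T \setminus T_v$ with its original root and root distribution $\pi$. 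Any $\theta'$ whose two pieces realize $P_1$ and $P_2$ and whose $M_{e'}$ remains singular therefore lies in the fiber of $\phi$ at $\theta$.

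To exhibit such a $\theta'$ on a Markov facet, I adapt the extremal decomposition used in the proof of Proposition~\ref{prop:main43}: any rank-$\leq 2$ nonnegative $2 \times 2$ matrix $Q$ admits the canonical boundary decomposition $Q = Q_{+1}\,q_{\cdot \mid 1}\,(1,0)^{T} + Q_{+2}\,q_{\cdot \mid 2}\,(0,1)^{T}$, with $q_{\cdot \mid \ell} = Q(\cdot,\ell)/Q_{+\ell}$ stochastic, in which the second factor of each summand is an extremal stochastic vector. I pick a pendant edge in $T_v$ when $v$ is internal (or in $T \setminus T_v$ when $v$ is a leaf of $T$), and flatten the corresponding subtree distribution to such a $2 \times 2$ matrix along that edge. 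The canonical decomposition then produces equivalent parameters in which the Markov matrix on that pendant edge has a zero entry; combining with unchanged parameters on the opposite side of $e$ yields the desired $\theta'$.

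The main obstacle is propagating this extremal decomposition through a multi-level tree when the chosen subtree has several internal nodes. I handle this by induction on the number of edges of the subtree: at each step I flatten the current subtree distribution to a $2 \times 2$ rank-$\leq 2$ matrix at the next pendant edge, apply the canonical decomposition to obtain one boundary Markov matrix together with an updated root distribution, and recurse into the smaller subtree obtained by deleting the chosen leaf. Nonnegativity is preserved at every step because marginalization and conditioning of a nonnegative probability distribution remain nonnegative, so the recursion terminates inside $\Theta$ at a facet of the desired type.
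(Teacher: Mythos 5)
Your Case A is fine and matches the paper's treatment: if some $\pi_i=0$ then row $i$ of the Markov matrices on edges at the root is irrelevant to $\phi(\theta)$, so it can be set to an extremal stochastic vector.

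Case B is where the argument breaks down. You correctly observe that a singular $M_e$ has both rows equal, producing a tensor factorization $\phi(\theta)=P_1\otimes P_2$, and you then try to re-realize the subtree factor $P_1$ with parameters on a Markov facet of the subtree's own parameter polytope. That cannot work in general: once the root distribution of $T_v$ is pinned to the common row $(a,1-a)$ of $M_e$, a generic $P_1\in\mathcal{M}_{T_v}$ is a regular value whose fiber is finite and consists entirely of interior (label-swapped) parameters, so no facet preimage of $P_1$ in the subtree model exists. The positive-dimensional fiber that Lemma~\ref{lem:fiber} exploits lives in the \emph{full} tree parametrization, not in the factor pieces. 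Concretely, your ``canonical boundary decomposition'' $Q = Q_{+1}q_{\cdot\mid 1}(1,0)^T + Q_{+2}q_{\cdot\mid 2}(0,1)^T$ is a $2\times 2$ (two-leaf tree) fact; the flattening of a multi-leaf subtree distribution at a pendant edge is $2\times 2^{m-1}$, and there is no ``next $2\times 2$ matrix'' to decompose in the proposed recursion, so the inductive step is not defined.

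The paper avoids all of this by giving an explicit re-parametrization that stays in the \emph{whole} tree's parameter space: for a 3-leaf tree with $M_3$ singular and no zero parameters, set $\pi'=\pi M_1$, $M_1'=I$, $M_2'=\operatorname{diag}(\pi')^{-1}M_1^T\operatorname{diag}(\pi)M_2$, $M_3'=M_3$. Singularity of $M_3$ (constant rows) is exactly what lets one collapse the sum over the hidden root index and absorb $M_1$ into $\pi$ and $M_2$, so that $\phi(\theta')=\phi(\theta)$ while $M_1'=I$ has zero off-diagonal entries. Larger trees are handled by the cherry-attachment induction. This ``pull-through'' move is the right mechanism; your column decomposition is its 2-leaf shadow, but you have not supplied a correct way to propagate it through a multi-level subtree, and the factor-by-factor realization you aim for is in general impossible.
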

 
\begin{proof} 
  For a $3$-leaf tree, rooted at the internal node, consider the parameters
  $\theta=(\pi, M_1, M_2, M_3)\in\Theta_{\rm sing}$. If $\pi_i=0$, then
  we may replace row $i$ of any or all $M_j$ with  $(1,0)$ to obtain $\theta'$.
  Otherwise, suppose $M_3$ is singular yet there are no zeros in the parameters.
  Define $\theta'$ by
  $\pi'=\pi M_1$, $M_1'= \hbox{the identity matrix}$,
  $M_2'=\operatorname{diag}(\pi')^{-1} M_1^T\operatorname{diag}(\pi)M_2$,
  and $M_3'=M_3$. 
  One checks that $\phi(\theta') = \phi(\theta)$,
  and $M_1'$ has a zero entry.
  
  The result is derived inductively for larger trees, by viewing them as built from
  3-leaf trees by attaching cherries.
\end{proof}

\begin{proof}[Proof of Theorem \ref{thm:tree1}]
  Points in $\operatorname{Int}(\Theta) \backslash \Theta_{\rm sing}$
  must map to points in the relative interior of the model $\mathcal{M}_T$. Thus the
  boundary of $\mathcal{M}_T$ is a subset of  $\phi(\partial \Theta) \cup
  \phi(\Theta_{\rm sing})$. By Lemma
  \ref{lem:fiber}, this is contained in $\phi(\partial  \Theta)$.

  To see that each of the components listed is a boundary component,
  we must show they have codimension 1 in the model, and a Zariski
  dense subset of points in them are limits of points outside the
  model. Since the complement of $\Theta_{\rm sing}$ intersects these
  facets of $\Theta$ in non-empty open sets, the
  codimension is as needed. Since all elements of a fiber of the
  parameterization $\phi$ which contains non-singular points are related by
  label swapping, even when the map is extended outside $\Theta$, one
  sees that non-singular points outside $\Theta$ cannot be mapped into
  the model, yet they are mapped arbitrarily close to the claimed
  component.

We have discussed all but two of the $4|E|+2$ facets of $\Theta$.
The remaining two facets,  where an entry of
the root distribution $\pi$ is $0$, contain only
  elements of $\Theta_{\rm sing}$, by Lemma \ref{lem:sing}.
  By Lemma \ref{lem:fiber}, they
  lie in fibers with points where
  some entry of a Markov matrix is zero. Thus they are mapped into a
  component of the boundary already identified.
  
  \smallskip
  
  It remains to be shown that the equational descriptions
  given in (1) and (2) are valid  on the respective
   components of the boundary of $\mathcal{M}_T$.
  
   For a  pendant edge $e$ as in (1), we can assume that the root of the tree is located at
  the non-leaf end of $e$. The sets $L_j$ span subtrees that intersect only at the root, 
  and for each $j$ 
  there is a $2\times 2^{|L_j|}$ matrix $A_j$, dependent only on the Markov matrices 
  on edges of the subtrees, which expresses the joint probabilities
  of states at the leaves in $L_j$, conditioned on the root. In particular, $A_1=M_e$. 
  Denoting the rows of $A_j$ by $a_j,b_j$, the $r$-dimensional flattening of our distribution is
$$ \pi_1 \cdot a_1 \otimes a_2 \otimes \cdots \otimes a_r
\,\,+ \,\,  \pi_2 \cdot b_1 \otimes b_2 \otimes \cdots \otimes b_r. $$
If  $a_{1i}=0$ (or $b_{1i}=0$) then the $i$th slice in the first index
has rank $\leq 1$.
 
  For an internal edge $e$ as in (2), assume the root of the tree is located at
  one end, and the Markov matrix on the edge is $M_e$, with rows $a_e,b_e$. 
  Let $L_1$, $L_2$ and $L_3$, $L_4$ be the leaves of the subtrees 
  attached to the respective ends of $e$.
  Then the $2^{|L_1\cup L_3|}\times 2^{|L_2\cup L_4|}$
  matrix flattening incompatible with $T$ can be expressed as
  \begin{equation}
  \label{eq:middleflat}
    A^T \operatorname{diag}(\pi_1a_e,\pi_2 b_e)B,
    \end{equation}
  where $A,B$ are $4\times 2^{|L_1\cup L_3|}$ and $4\times 2^{|L_2\cup
  L_4|}$ matrices, respectively. The entries of $A$ depend on the
  parameters on the subtrees on $L_1$ and $L_3$, while those of
  $B$ depend on the parameters on the subtrees on $L_2$ and
  $L_4$. Thus if $M_e$ has a zero entry, 
  then the $4 \times 4$-matrix 
$  \operatorname{diag}(\pi_1a_e,\pi_2 b_e)$ is singular, and 
hence   the flattening (\ref{eq:middleflat}) has rank at most $3$.
\end{proof}

Several recent works found semialgebraic descriptions of the 2-state
general Markov model on trees that is considered here.  In
\cite{pwz-2009-semialgebraictrees} a different coordinate system is
used, but \cite{allman2012semialgebraic} follows the same framework as
this paper. Although some of the inequalities given in
\cite{allman2012semialgebraic} hint at the form of the algebraic
boundary determined in Theorem \ref{thm:tree1}, those inequalities are
considerably more complicated than our description here.  While the
inequalities provide tests for model membership, the relative
simplicity of the algebraic boundary is expected to be advantageous
for other purposes, such as understanding the geometry of
log-likelihood functions over $\mathcal{M}_T$, and studying the limit
behavior of iterative methods for parameter estimation such as
Expectation Maximization (EM).

\section{Towards Higher Rank}

There are formidable obstacles to extending
our results to tensors of rank $r>2$.  First of all, we do not know
how to generalize the supermodular constraints.  Second, we run into
problems of non-identifiability, even in the case of matrices
$(n=2)$. Recall also (e.g.~from \cite[Example 4.1.2]{LAS}) that a
nonnegative matrix of rank $3$ need not have nonnegative rank $3$.
The topological analysis given by Mond {\it et al.}~\cite{MSS} illustrates well the
difficulties involved in obtaining a characterization of the
semialgebraic set of $d_1 \times d_2$ matrices of nonnegative rank $\leq 3$.

On the other hand, for tensors of dimension $n \geq 3$, rank decompositions
are often identifiable when $r$ is small relative to $d_1,d_2,\ldots,d_n$.
In such situations, when the model is identifiable, 
one might hope for results similar to
Theorems \ref{theorem:zwei} and  \ref{thm:tree1}.
However, a third obstacle arises: in order to characterize 
algebraic boundaries, one needs a version of Lemma \ref{lem:sing} for
the singular locus
$\Theta_{\rm sing}$ of the model parameterization~$\phi$.

\smallskip

In what follows, we illustrate these issues for two rank $3$ examples.

\begin{exmp}
\label{ex:332rank3}
Consider the set $\mathcal{M}$ of $3\times 3\times 2$ tensors of
nonnegative rank $\leq 3$.  This is a smallest format for which
rank $3$ decompositions are generically unique, up to label
swapping. Normalizing the tensor entries to sum to $1$, we obtain
$\mathcal{M}$ as the image of the map
 $$ \begin{matrix} \phi : \Theta \rightarrow \Delta_{17} \,,\,\
   ( \pi;a_1,a_2,a_3;b_1,b_2,b_3;c_1,c_2,c_3)\mapsto \qquad \\
   \qquad\quad \pi_1 \cdot a_1\otimes a_2\otimes a_3 +\pi_2 \cdot
   b_1\otimes b_2\otimes b _3 +\pi_3 \cdot c_1\otimes c_2\otimes c_3,
 \end{matrix}
 $$
 where $\pi\in \R_{\ge 0}^3$, $a_i,b_i,c_i\in \R_{\ge 0}^3$ for
 $i=1,2$, and $a_3,b_3,c_3\in \R_{\ge 0}^2$~all have coordinate sum
 $1$.  The domain is the polytope $\Theta = \Delta_2 \times (\Delta_2
 {\times} \Delta_2 {\times} \Delta_1)^3$.  The facets of $\Theta$ are
 given by parameters being $0$.  This map is generically $6$-to-$1$,
 so $\phi(\Theta) = \mathcal{M}$ is full-dimensional in the simplex
 $\Delta_{17}$.  The Zariski closure $\overline{\mathcal{M}}$ is the
 entire projective space $\mathbb{P}^{17} $ of $3 {\times} 3 {\times}
 2$ tensors.

The algebraic boundary $\overline{\partial \mathcal{M}}$ has eight
irreducible components:

\begin{enumerate}
\item[(a)]
Two components $\,\overline{\phi(\{a_{3k} = 0\})} \, = 
\overline{\phi(\{b_{3k} = 0\})} \, = 
\overline{\phi(\{c_{3k} = 0\})} $, for $k=1,2$,
given by the $3 {\times} 3$-slice $P_k = [p_{**k}]$ having rank $\leq 2$.
\item[(b)] Three components given, for $i=1,2,3$,
 by the $3 \times 3$-matrix $P_1 \cdot (P_2)^{-1}$
having an eigenvector with zero $i$-th coordinate.
\item[(c)] Three components given, for $j =1,2,3$,
by the $3 \times 3$-matrix $P_1^T  \cdot (P_2)^{-T}$
having an eigenvector with zero $j$-th coordinate.
\end{enumerate}
The two components (a) are the cubic hypersurfaces given by the
determinants of $P_1$ and $P_2$.
The six components (b) and (c) are hypersurfaces
of degree $6$.
For instance,  the polynomial $K $ that defines the (b)
component $\,\overline{\phi(\{a_{13} = 0\})}  = 
\overline{\phi(\{b_{13} = 0\})} = 
\overline{\phi(\{c_{13} = 0\})} $
~equals
\begin{small}
$$ \begin{matrix} K\,=\,
  p_{111} p_{212} p_{321}^2 p_{332}^2 
- 2 p_{111} p_{212} p_{321} p_{322} p_{331} p_{332} + p_{111} p_{212} p_{322}^2 p_{331}^2 \qquad \\
\quad
 - p_{111} p_{222} p_{311} p_{321} p_{332}^2 
 + p_{111} p_{222} p_{311} p_{322} p_{331} p_{332}
   - p_{111} p_{222} p_{312} p_{322} p_{331}^2 \\
 + p_{111} p_{222} p_{312} p_{321} p_{331} p_{332} 
  + p_{111} p_{232} p_{311} p_{321} p_{322} p_{332} 
   - p_{112} p_{211} p_{321}^2 p_{332}^2 \\ 
   - p_{111} p_{232} p_{311} p_{322}^2 p_{331} 
  + p_{111} p_{232} p_{312} p_{321} p_{322} p_{331}
     - p_{111} p_{232} p_{312} p_{321}^2 p_{332}  \\
 + 2 p_{112} p_{211} p_{321} p_{322} p_{331} p_{332} 
 - p_{112} p_{211} p_{322}^2 p_{331}^2 + p_{112} p_{221} p_{311} p_{321} p_{332}^2 \\
 - p_{112} p_{221} p_{311} p_{322} p_{331} p_{332} 
  - p_{112} p_{221} p_{312} p_{321} p_{331} p_{332}
 + p_{112} p_{221} p_{312} p_{322} p_{331}^2 \\ 
  - p_{112} p_{231} p_{311} p_{321} p_{322} p_{332} 
 + p_{112} p_{231} p_{311} p_{322}^2 p_{331}  + p_{112} p_{231} p_{312} p_{321}^2 p_{332} \\
 - p_{112} p_{231} p_{312} p_{321} p_{322} p_{331} 
  - p_{121} p_{212} p_{311} p_{321} p_{332}^2
 + p_{121} p_{212} p_{311} p_{322} p_{331} p_{332} \\
   + p_{121} p_{212} p_{312} p_{321} p_{331} p_{332}
 - p_{121} p_{212} p_{312} p_{322} p_{331}^2 + p_{121} p_{222} p_{311}^2 p_{332}^2 \\ 
  \end{matrix}  $$  $$ \begin{matrix}
  - 2 p_{121} p_{222} p_{311} p_{312} p_{331} p_{332}
 + p_{121} p_{222} p_{312}^2 p_{331}^2
 - p_{121} p_{232} p_{311}^2 p_{322} p_{332} \\ 
      + p_{121} p_{232} p_{311} p_{312} p_{321} p_{332}
 + p_{121} p_{232} p_{311} p_{312} p_{322} p_{331} - p_{121} p_{232} p_{312}^2 p_{321} p_{331} \\ 
 + p_{122} p_{211} p_{311} p_{321} p_{332}^2 
 - p_{122} p_{211} p_{311} p_{322} p_{331} p_{332}
 - p_{122} p_{211} p_{312} p_{321} p_{331} p_{332} \\ 
  + p_{122} p_{211} p_{312} p_{322} p_{331}^2 
 - p_{122} p_{221} p_{311}^2 p_{332}^2 + 2 p_{122} p_{221} p_{311} p_{312} p_{331} p_{332} \\
 - p_{122} p_{221} p_{312}^2 p_{331}^2   + p_{122} p_{231} p_{311}^2 p_{322} p_{332}
 - p_{122} p_{231} p_{311} p_{312} p_{321} p_{332} \\ 
    - p_{122} p_{231} p_{311} p_{312} p_{322} p_{331} 
 + p_{122} p_{231} p_{312}^2 p_{321} p_{331} + p_{131} p_{212} p_{311} p_{321} p_{322} p_{332} \\
 - p_{131} p_{212} p_{311} p_{322}^2 p_{331}  - p_{131} p_{212} p_{312} p_{321}^2 p_{332}
 + p_{131} p_{212} p_{312} p_{321} p_{322} p_{331} \\
  - p_{131} p_{222} p_{311}^2 p_{322} p_{332} 
 + p_{131} p_{222} p_{311} p_{312} p_{321} p_{332} + p_{131} p_{222} p_{311} p_{312} p_{322} p_{331} \\
 - p_{131} p_{222} p_{312}^2 p_{321} p_{331} + p_{131} p_{232} p_{311}^2 p_{322}^2
 - 2 p_{131} p_{232} p_{311} p_{312} p_{321} p_{322} \\ 
 + p_{131} p_{232} p_{312}^2 p_{321}^2 
 - p_{132} p_{211} p_{311} p_{321} p_{322} p_{332} + p_{132} p_{211} p_{311} p_{322}^2 p_{331} \\
 + p_{132} p_{211} p_{312} p_{321}^2 p_{332}  - p_{132} p_{211} p_{312} p_{321} p_{322} p_{331} 
 + p_{132} p_{221} p_{311}^2 p_{322} p_{332} \\ - p_{132} p_{221} p_{311} p_{312} p_{321} p_{332} 
 - p_{132} p_{221} p_{311} p_{312} p_{322} p_{331}  + p_{132} p_{221} p_{312}^2 p_{321} p_{331} \\
 - p_{132} p_{231} p_{311}^2 p_{322}^2 + 2 p_{132} p_{231} p_{311} p_{312} p_{321} p_{322} 
- p_{132} p_{231} p_{312}^2 p_{321}^2.
\end{matrix}
$$
\end{small}

This polynomial was found using
the reduced Kalman matrix in \cite[equation (1.5)]{OS}.
Under the parametrization, this expression factors as
\begin{multline*}
K = 
\pi_1^2\pi_2^2\pi_3^2\, a_{13} b_{13} c_{13}
 (a_{31} b_{32} - a_{32} b_{31})
 (a_{31} c_{32} - a_{32} c_{31})
  (b_{31} c_{32} - b_{32} c_{31})\\
  \times
  {\rm det} [a_1,b_1,c_1]
{\rm det}[a_2,b_2,c_2]^2
\end{multline*}

To prove that there is nothing else in $\overline{\partial \mathcal{M}}$,
we proceed as in Lemma \ref{lem:fiber}.
We examine the Jacobian of  $\phi$, which has
rank $17$ at generic points of $\Theta$. Using symbolic computation, we find that
its singular locus
$\Theta_{\rm sing}$, where the rank drops, decomposes into
three types of components:
\begin{enumerate}
\item points with $\pi$ having a zero entry, at which the rank of the
  Jacobian is generically 12,
\item points with two of $a_3,b_3,c_3$ equal, at which the rank of the
  Jacobian is generically 15,
\item points with $a_1,b_1,c_1$ (or with $a_2,b_2,c_2$) linearly
  dependent, at which the rank of the Jacobian is generically 14.
\end{enumerate}
We now show that every point of $\Theta_{\rm sing}$ lies in a fiber of
$\phi$ that intersects the boundary of the polytope $\Theta$.  For
singular points of type (1), if say $\pi_1=0$, one may replace an
$a_i$ with any other vector to obtain another point in the fiber, so
this is clear. For type (2), if say $b_3=c_3$, then
$$\pi_2 \cdot b_1\otimes b_2\otimes b _3\, +\,
\pi_3 \cdot c_1\otimes c_2\otimes c_3\,\,= \,\,A\otimes b_3$$ for a
$3\times 3$ matrix $A$ of nonnegative rank $2$. Since one can find a
nonnegative rank $2$ decomposition of $A$ with zeros in some vector
entry, we can construct the desired boundary point in the fiber.

For type (3) singular points we argue as follows.  Suppose $P$ is the
image of parameters where $a_2,b_2,c_2$ are dependent. Let $d$ be a
nonzero, nonnegative vector in the span of $a_1,b_1$ and consider the
line of tensors
\begin{align*}B(t)\,&=\,P-\pi_3 \cdot (c_1+td)\otimes c_2\otimes c_3\\
&=\, a_1\otimes a_2\otimes a_3 \,+\,
\pi_2 \cdot b_1\otimes b_2\otimes b _3 \,-\, t\pi_3 \cdot d\otimes c_2\otimes c_3.
\end{align*}
The sets
$\{a_1,b_1,d\}$, $\{a_2,b_2,c_2\}$ and $\{a_3,b_3,c_3\}$ are
dependent, so the three matrix flattenings of $B(t)$ have rank $\leq 2$.
Thus for all $t$, the tensor $B(t)$ has border rank $\leq 2$. 
Also, $B(0)$ has nonnegative rank~$2$.

Since $B(t)$ fails to have nonnegative rank~$2$ for $t \gg 0$, there
exists $t_0 \geq 0$ such that $B(t_0)$ lies on the boundary of the
tensors of nonnegative rank $2$.  By Theorem \ref{theorem:zwei},
$B(t_0)$ has a nonnegative rank $2$ decomposition with a zero
coordinate in some parameter vector.  Since
 $$ P\,=\, B(t_0)+\pi_3(c_1+t_0d)\otimes c_2\otimes c_3,$$
 the tensor $P$ has a nonnegative rank $3$ decomposition with a zero
 parameter. Hence $P$ lies in one of the
 eight varieties seen in (a),(b),(c).
 
 We note that a distribution $P$ with invertible slices $P_i$
   lies in $\mathcal{M}$ if and
 only if all eigenvalues and eigenvectors of the $3 \times 3$-matrices
 $P_1 \cdot (P_2)^{-1}$ and $P_1^T \cdot (P_2)^{-T}$ are
 nonnegative. Here, one should be able to pass to the closure and infer
  a nice semialgebraic description of $\mathcal{M}$. \qed
 \end{exmp}

\begin{exmp}
\label{ex:2222rank3}
Let $\mathcal{M}$ be the set of $2 \times 2\times 2\times 2$ tensors
of nonnegative rank $\leq 3$. As in the previous example, we normalize tensors to have entries summing to one. This model is not identifiable: the
generic fiber of its stochastic parametrization $\phi$ is a curve.
Facets of the parameter polytope $\Theta = \Delta_2 \times
(\Delta_1)^{12}$ are mapped into subsets of the model $\mathcal{M}$
that are Zariski dense in $\overline{\mathcal{M}}$.  We note that the
$13$-dimensional variety $\overline{\mathcal{M}}$ is a complete
intersection of degree $16 = 4 \cdot 4$ in $\mathbb{P}^{15}$. It is
defined by the determinants of any two of the three $4 \times
4$-flattenings of $P$.

Components of the algebraic boundary $\overline{\partial \mathcal{M}}$
might now be obtained from codimension $2$ faces of the polytope $\Theta$.
For instance, write
$$ P \, = \, \pi_1a_1 \otimes a_2 \otimes a_3 \otimes a_4 \,
+  \pi_2 b_1 \otimes b_2 \otimes b_3 \otimes b_4 \,
+   \pi_3 c_1 \otimes c_2 \otimes c_3 \otimes c_4 ,
$$
and consider the face 
$\{a_{11} = b_{22} = 0\}$ of $\Theta$.
Then $\overline{\phi(\{a_{11} = b_{22} = 0\})}$
is suspected to be a component in 
%the algebraic boundary 
$\overline{\partial \mathcal{M}}$. This variety has dimension $12$ and
degree $56$ in $\mathbb{P}^{15}$.  It is defined, as a subscheme of
$\overline{\mathcal{M}}$, by $55$ polynomials of degree $8$
in the $16$ unknowns $p_{ijkl}$.
The smallest of these degree $8$ polynomials has $96$ terms,
and we shall resist the temptation to  list them.
The semialgebraic geometry of $\mathcal{M}$ deserves further analysis.
 \qed
\end{exmp}

%\bibliographystyle{siam}
%\bibliography{treeEM}

\end{document}